\patchcmd\Gread@eps{\@inputcheck#1 }{\@inputcheck"#1"\relax}{}{}
\def\kn{\mathrm{KNRS}}
\newtheorem{myDef}{Definition}
\newtheorem{thm}{Theorem}[section]
\newtheorem{cor}[thm]{Corollary}
\newtheorem{lem}[thm]{Lemma}
\newtheorem{conj}[thm]{Conjecture}
\newtheorem{claim}{Claim}
\let\svthefootnote\thefootnote
\newcommand\blankfootnote[1]{%
	\let\thefootnote\relax\footnotetext{#1}%
	\let\thefootnote\svthefootnote%
}
\begin{document}

\title{\Large Kohayakawa-Nagle-R{\"o}dl-Schacht conjecture for subdivisions}
	
\date{}
	
\author{
Hao Chen
~~~~~~~
Yupeng Lin
~~~~~~~
Jie Ma
}

\maketitle
	
\begin{abstract}
In this paper, we study the well-known Kohayakawa-Nagle-R{\"o}dl-Schacht (KNRS) conjecture,
with a specific focus on graph subdivisions.
The KNRS conjecture asserts that for any graph $H$, locally dense graphs contain asymptotically at least the number of copies of $H$ found in a random graph with the same edge density.
We prove the following results about $k$-subdivisions of graphs (obtained by replacing edges with paths of length $k+1$).
\begin{itemize}
    \item If $H$ satisfies the KNRS conjecture, then its $(2k-1)$-subdivision satisfies Sidorenko's conjecture, extending a prior result of Conlon, Kim, Lee and Lee;
    \item If $H$ satisfies the KNRS conjecture, then its $2k$-subdivision satisfies a constant-fraction version of the KNRS conjecture;
    \item If $H$ is regular and satisfies the KNRS conjecture, then its $2k$-subdivision also satisfies the KNRS conjecture.
\end{itemize}
These findings imply that all balanced subdivisions of cliques satisfy the KNRS conjecture,
improving upon a recent result of Brada{\v c}, Sudakov and Wigerson.
Our work provides new insights into this pivotal conjecture in extremal graph theory.
\blankfootnote{School of Mathematical Sciences, University of Science and Technology of China, Hefei, Anhui, 230026, China. Research supported by National Key Research and Development Program of China 2023YFA1010201 and National Natural Science Foundation of China grant 12125106.}
\end{abstract}

\section{Introduction}
Let $H$ and $G$ be two graphs.
A {\it graph homomorphism} from $H$ to $G$ is a map $f:V(H)\rightarrow V(G)$ such that $f(u)f(v)\in E(G)$ whenever $uv\in E(H)$.
The {\it homomorphism density} $t(H,G)$ denotes the probability that a random map from $V(H)$ to $V(G)$ forms a graph homomorphism.
Throughout this paper, the {\it $k$-subdivision} of $H$, denoted by $H^{(k)}$, is obtained by replacing each edge $uv$ of $H$ with an internally disjoint path of length $k+1$ between $u$ and $v$.

\subsection{Background}
One of the most fundamental problems in Combinatorics is to accurately estimate the occurrence of a specific combinatorial substructure.
In this paper, we delve into the problem of counting the occurrences of a fixed subgraph in graphs with a given number of vertices and edges.
In this direction, a well-known conjecture by Sidorenko~\cite{S93} states that for any bipartite graph $H$, the random graph minimizes $t(H,G)$ among all graphs $G$ with a fixed edge density.

\begin{conj}[Sidorenko's conjecture~\cite{S93}]\label{Sidorenko}
For every bipartite graph $H$, it holds that $t(H,G)\geq t(K_2,G)^{|E(H)|}$ for every graph $G$.
\end{conj}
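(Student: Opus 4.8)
The plan is to adopt the information‑theoretic (entropy) framework of Szegedy, later refined by Li and Szegedy, which reduces Sidorenko's inequality to the construction of a high‑entropy probability distribution on homomorphisms. Fix a graph $G$ on $n$ vertices and set $p=t(K_2,G)$; write $N(H,G)$ for the number of homomorphisms from $H$ to $G$, so $t(H,G)=N(H,G)/n^{|V(H)|}$. Since any distribution $\mu$ supported on the set of homomorphisms has entropy $\mathbb{H}(\mu)\le\log N(H,G)$, it suffices to exhibit $\mu$ with $\mathbb{H}(\mu)\ge |V(H)|\log n+|E(H)|\log p$. Expanding $\mathbb{H}(\mu)$ by the chain rule along an ordering $v_1,\dots,v_m$ of $V(H)$, this amounts to showing that the conditional entropy of each newly revealed vertex, given those before it, ``pays for itself'': it should contribute at least $\log n-\log(1/p)$ times the number of back‑edges it closes.

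First I would reduce to $H$ connected (entropy and density both tensorize over components) and bipartite with parts $(A,B)$, and pass to the graphon limit by a standard compactness argument, replacing $G$ by $W\colon[0,1]^2\to[0,1]$ with $t(H,W)=\int\prod_{uv\in E(H)}W(x_u,x_v)\,\prod dx_u$ and $p=\int W$. The core construction then defines conditional densities: when revealing a vertex $v$ whose already‑revealed neighbours are $w_1,\dots,w_r$, sample $x_v$ from the tilted density proportional to $W(x_v,x_{w_1})\cdots W(x_v,x_{w_r})$. For a tree this recovers the bound $t(H,W)\ge p^{|E(H)|}$ by applying Jensen's inequality edge by edge (the classical argument). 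For $H$ with cycles, the tilting introduces correlations among the already‑revealed neighbours $w_1,\dots,w_r$, and these must be quantitatively controlled; the key device is a sequence of Cauchy--Schwarz/tensor‑power (Li--Szegedy ``conditioning'') inequalities, applied along the chosen order, bounding the entropy deficit incurred when a vertex simultaneously closes several cycles.

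To make the order and the Cauchy--Schwarz steps succeed in general I would decompose $H$ along its block‑cut structure, and within each $2$‑connected block along a carefully chosen spanning tree together with an ordering of the non‑tree edges, so that each non‑tree edge is ``absorbed'' by a single reflection step. Graphs admitting such a lamination of their cycle space — forests, even cycles, complete bipartite graphs, bipartite graphs with a vertex complete to the other side, strongly tree‑decomposable graphs — are exactly the ones for which this method is already known to work, and the target is to push the scheme to arbitrary bipartite $H$.

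The hard part — and the reason this is the central open problem of the area — is precisely the $2$‑connected bipartite graphs whose cycle space admits \emph{no} such lamination into single‑reflection steps: dense, girth‑$6$ bipartite graphs, the prototype being incidence graphs of projective planes and generalized polygons, where revealing any vertex closes many short cycles at once and the naive Cauchy--Schwarz loses too much entropy to close the induction. Overcoming this would require either a genuinely global convexity statement for $t(\cdot,W)$ that does not proceed vertex by vertex, or a new conditioning scheme that amortizes the accumulated entropy deficit across many vertices simultaneously; I would pursue the latter, coupling the entropy method with a spectral analysis of the integral operator with kernel $W$ to argue that the correlations built up along the order cannot persistently act against the bound.
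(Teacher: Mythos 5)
There is a fundamental mismatch here: the statement you are asked about is Sidorenko's conjecture itself, which the paper does not prove and does not claim to prove --- it is quoted as Conjecture~\ref{Sidorenko} precisely because it is a famous open problem (the paper even notes that the smallest open case is $K_{5,5}\setminus C_{10}$, and only uses the much weaker known fact that paths are Sidorenko). So there is no ``paper's own proof'' to match, and any complete proof you supplied would be a major breakthrough rather than a reconstruction.

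Your proposal is not such a proof, and you essentially concede this in the final paragraph. The entropy/conditioning framework you describe (Szegedy, Li--Szegedy, and the tree/reflection-based reductions) is a correct account of how the \emph{known} cases are handled --- trees via Jensen edge by edge, and graphs whose cycle space can be laminated into single Cauchy--Schwarz reflection steps --- but the step ``push the scheme to arbitrary bipartite $H$'' is exactly where the argument has no content. For a $2$-connected bipartite graph in which revealing one vertex closes many cycles simultaneously (your own example of incidence graphs of generalized polygons, or already $K_{5,5}\setminus C_{10}$), the entropy deficit from the naive conditioning is not controlled by any lemma you state; the proposed remedies (``a genuinely global convexity statement'' or ``a spectral amortization of the deficit'') are named as directions, not carried out, and no inequality is formulated that would replace the failing Cauchy--Schwarz step. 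Concretely, the chain-rule target $\mathbb{H}(\mu)\geq |V(H)|\log n + |E(H)|\log p$ is asserted but never established beyond the classes where it was already known. So the gap is not a technical slip but the absence of the central argument: the proposal is a survey of the state of the art plus a research plan, whereas the statement, as an open conjecture, admits no verification of this kind --- and within this paper it is only ever used as a hypothesis or via special cases (paths), never proved.
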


If a bipartite graph $H$ satisfies this conjecture, then we say $H$ is {\it Sidorenko}.
Despite being proven for various specific bipartite graphs, Sidorenko's conjecture remains widely open (see e.g.,~\cite{CKLL18,CL17,CL21} for some recent advancements in this conjecture).
The smallest open case of Sidorenko's conjecture is $K_{5,5} \setminus C_{10}$, a 3-regular bipartite graph on 10 vertices.

If $H$ is non-bipartite, then the aforementioned inequality cannot hold in general.
This is because we can take counterexamples $G$ to be any bipartite graph, resulting in $t(H,G)=0$ while $t(K_2,G)>0$.
It is natural to wonder if, by adding some extra condition on the host graph $G$, Conjecture~\ref{Sidorenko} could be extended to general graphs. Kohayakawa, Nagle, R{\"o}dl and Schacht~\cite{KNRS10} speculate that the only barrier for a given non-bipartite graph $H$ to satisfy Sidorenko's conjecture is the presence of a large sparse vertex subset within the host graph $G$.
Following~\cite{KNRS10}, an $n$-vertex graph $G$ is called {\it $(\rho,d)$-dense} if any subset $X$ of at least $\rho n$ vertices of $G$ contains at least $\frac{d}{2}|X|^2$ edges.
We also call these graphs {\it locally dense}.
Kohayakawa et al.~\cite{KNRS10} posed a question which is now widely recognized as a conjecture.

\begin{conj}[Kohayakawa, Nagle, R{\"{o}}dl and Schacht~\cite{KNRS10}]\label{knrs}
For every graph $H$ and all reals $d, \varepsilon>0$, there exist a $\rho=\rho(d,H,\varepsilon)>0$ such that $t(H,G)\geq (1-\varepsilon)\cdot d^{|E(H)|}$ holds for every sufficiently large $(\rho,d)$-dense graph $G$.
\end{conj}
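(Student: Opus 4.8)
\medskip
\noindent\textbf{A strategy for Conjecture~\ref{knrs}.}
Conjecture~\ref{knrs} is wide open in full generality, so what follows is an outline of the framework and of the partial strategy one can realistically push through, not a complete argument. First I would isolate the genuinely hard regime. Taking $X=V(G)$ in the definition of $(\rho,d)$-density forces $e(G)\ge \tfrac d2|V(G)|^2$, hence $t(K_2,G)\ge d$; consequently, if $H$ is bipartite and Sidorenko, then $t(H,G)\ge t(K_2,G)^{|E(H)|}\ge d^{|E(H)|}$ already holds with $\varepsilon=0$ and without any local-density assumption. So all of the content of Conjecture~\ref{knrs} lies with non-bipartite $H$: the bipartite host graphs $G$ that send $t(H,G)$ to $0$, and thereby obstruct Sidorenko's inequality for such $H$, are exactly the configurations that $(\rho,d)$-density in the limit $\rho\to0$ is meant to forbid.

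Next I would pass to an analytic reformulation. Running Szemer\'edi's regularity lemma on a large $(\rho,d)$-dense graph produces a bounded reduced weighted graph whose cluster weights inherit a local-density constraint, and in the graphon limit the closure of the class of $(\rho,d)$-dense graphs as $\rho\to0$ is exactly the class of graphons $W$ for which the average of $W$ over $S\times S$ is at least $d$ for every measurable vertex subset $S$. Conjecture~\ref{knrs} then becomes the clean assertion that $t(H,W)\ge d^{|E(H)|}$ for every such $W$ and every $H$. The plan from here would be to produce, for each $H$, an entropy- or convexity-type inequality certifying this bound, in the spirit of the known proofs of Sidorenko's conjecture for specific bipartite graphs, but now exploiting the extra pointwise-on-subsets lower bound on $W$ that plain edge density does not supply.

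The last step is where essentially all the difficulty lies, and I expect it to be the main obstacle: the general case seems to require a genuinely new idea and is out of reach of a direct attack. The realistic plan, and the one carried out in the remainder of the paper, is instead to bootstrap the instances of the KNRS and Sidorenko conjectures that are \emph{already} known, transporting them along graph operations that manufacture exploitable structure. Concretely: starting from a graph $H$ for which Conjecture~\ref{knrs} holds, pass to a subdivision $H^{(k)}$ and show that the local-density hypothesis on $G$ can be absorbed one subdivided edge at a time---each subdivided edge being a path of length $k+1$, hence a bipartite Sidorenko gadget whose two endpoints see an averaged host that is still locally dense---so that the $H^{(k)}$-count in $G$ is bounded below by an $H$-count in an auxiliary weighted graph to which the hypothesis on $H$ applies. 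Depending on the parity of the subdivision this yields Sidorenko's conjecture for $H^{(2k-1)}$, a constant-fraction version of Conjecture~\ref{knrs} for $H^{(2k)}$, or Conjecture~\ref{knrs} itself for $H^{(2k)}$ when $H$ is regular; combined with the known clique cases of the conjecture, this settles Conjecture~\ref{knrs} for all balanced subdivisions of cliques.
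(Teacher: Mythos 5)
You were asked about a statement that is an open conjecture: the paper does not prove Conjecture~\ref{knrs} (it only cites it from~\cite{KNRS10} and establishes special cases), so there is no proof of record to compare against, and your decision to offer a framework rather than a purported proof is the honest and correct call. The ingredients you describe are accurate and match the paper's setup: taking $X=V(G)$ does give $t(K_2,G)\geq d$, so a bipartite Sidorenko graph is $\kn$ with $\varepsilon=0$ (exactly the paper's remark that Sidorenko implies $\kn$); the graphon reformulation you sketch is Lemma~\ref{graphon form} (from~\cite{BSW}); and the bootstrap along subdivisions --- absorbing each path of length $k+1$ into an auxiliary kernel to which the $\kn$ hypothesis on $H$ applies --- is precisely what the paper executes via $W_{s}$, $W_{P_s}$ and Lemma~\ref{lem:transform} in Theorems~\ref{even sub} and~\ref{regular sub}, yielding Corollary~\ref{balance sub}. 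One small caveat: your phrase that ``all of the content of Conjecture~\ref{knrs} lies with non-bipartite $H$'' overstates the reduction, since Sidorenko's conjecture is itself open (e.g.\ for $K_{5,5}\setminus C_{10}$), so the bipartite instances of Conjecture~\ref{knrs} are only conditionally settled; and to be clear, nothing in your outline (or in the paper) constitutes progress on the full conjecture itself.
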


We say a graph $H$ is {\it $\kn$} if $H$ satisfies Conjecture~\ref{knrs}.
Observe that any bipartite graph satisfying Conjecture~\ref{Sidorenko} evidently satisfies Conjecture~\ref{knrs},
i.e., any Sidorenko graph is $\kn$.
On the other hand, only very few non-bipartite graphs are known to be $\kn$.
These known examples include all complete multipartite graphs~\cite{KNRS10}, all odd cycles~\cite{Re14}, unicycle graphs~\cite{L21}, all graphs obtained from a cycle by adding a chord~\cite{L21}, as well as graphs obtained by gluing complete multipartite graphs or odd cycles in a tree-like way~\cite{L21}.

Very recently, Brada{\v c}, Sudakov and Wigderson~\cite{BSW} proved that graphs constructed from smaller KNRS graphs by certain symmetric gluing operations are KNRS.
Furthermore, Brada{\v c}, Sudakov and Wigderson~\cite{BSW} also explored a more restricted version of Conjecture~\ref{knrs} by imposing an ``almost regular'' condition on the host graph.
An $n$-vertex graph $G$ is called {\it $(\rho,d)$-nearly-regular} if the degrees of all but at most $\rho n$ vertices of $G$ fall within the range $[(d-\rho)n,(d+\rho)n]$.
Brada{\v c} et al. made the following conjecture.

\begin{conj}[Brada{\v c}, Sudakov and Wigderson \cite{BSW}, see Conjecture~1.8]\label{regular-KNRS}
For every graph $H$ and all real $d,\varepsilon>0$, there exists a $\rho=\rho(d,H,\varepsilon)>0$ such that $t(H,G)\geq (1-\varepsilon)\cdot d^{|E(H)|}$ holds for every sufficiently large $(\rho,d)$-dense $(\rho,d)$-nearly-regular graph $G$.
\end{conj}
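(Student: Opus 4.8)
The plan is to pass to the graphon limit, where the near-regularity hypothesis turns out to force the host to be \emph{positive semidefinite}, and then to expand around the constant function. A standard compactness argument (let $\rho\to 0$; both the density bound and the near-regularity bound survive in the limit, and conversely a limiting counterexample blows up to finite ones) reformulates Conjecture~\ref{regular-KNRS} for a fixed $H$ as: \emph{if $W$ is a $d$-regular graphon, i.e.\ $\int W(x,y)\,dy=d$ for a.e.\ $x$, that is also locally dense, i.e.\ $\int_{S\times S}W\ge d\,\mu(S)^2$ for every measurable $S$, then $t(H,W)\ge d^{|E(H)|}$}; the $\varepsilon$ and $\rho$ disappear in the limit.

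The key point is that, under the regularity hypothesis, local density is \emph{equivalent} to $W$ being positive semidefinite. Writing $J$ for the constant-$1$ graphon and $K=W-dJ$, regularity says precisely that $K\mathbf 1=0$ (apply $K$ to the constant function $\mathbf 1$), while local density says $\langle K\mathbbm 1_S,\mathbbm 1_S\rangle\ge 0$ for all $S$. A random-rounding argument --- place each point $x$ into $S$ independently with probability $g(x)$, so $\mathbb E\,\langle K\mathbbm 1_S,\mathbbm 1_S\rangle=\langle Kg,g\rangle$ since the diagonal is null --- upgrades this to $\langle Kg,g\rangle\ge 0$ for every $g\colon[0,1]\to[0,1]$, hence for every nonnegative $g$ by scaling; and since $K\mathbf 1=0$, the form $g\mapsto\langle Kg,g\rangle$ is invariant under $g\mapsto g+c\mathbf 1$, so nonnegativity on nonnegative $g$ already forces it for all bounded $g$, i.e.\ $K\succeq 0$, i.e.\ $W\succeq 0$. (The converse is immediate.) So it suffices to prove that every $d$-regular positive semidefinite graphon $W$ satisfies $t(H,W)\ge d^{|E(H)|}$; note $d=\lambda_1(W)$ is then the largest eigenvalue, realized by the constant eigenfunction.

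Now set $U=W-dJ$, so $U\succeq 0$ and $U\mathbf 1=0$, and expand $t(H,W)=\sum_{F\subseteq E(H)} d^{\,|E(H)|-|F|}\,p_F(U)$ with $p_F(U)=\int\prod_{uv\in F}U(x_u,x_v)\prod_{v\in V(H)}dx_v=t(F,U)$. The term $F=\varnothing$ equals $d^{|E(H)|}$ exactly, and any $F$ containing a vertex of degree $1$ contributes $0$, since integrating out that variable yields the factor $\int U(x,\cdot)=0$; thus the surviving $F$ are $\varnothing$ together with the subgraphs of minimum degree at least $2$ on their support. If $H$ is a forest there are no such $F$ besides $\varnothing$, so equality holds; hence assume $H$ has a cycle, and note that for the smallest surviving $F$, a shortest cycle $C_\ell$, one has $p_{C_\ell}(U)=t(C_\ell,U)=\sum_i\lambda_i(U)^\ell\ge 0$ because $U\succeq 0$. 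It remains to show that the full sum $\sum_{\varnothing\ne F,\ \delta(F)\ge 2} d^{\,|E(H)|-|F|}\,t(F,U)$ is nonnegative, using $U\succeq 0$ and the pointwise bounds $-d\le U\le 1-d$.

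This last step is the main obstacle, and I do not expect it to go through for all $H$: for a general subgraph $F$ with $\delta(F)\ge 2$ --- a theta graph, $K_4$, and so on --- $t(F,U)$ need not be nonnegative even when $U\succeq 0$, so the non-cycle terms cannot be discarded, and proving nonnegativity of the whole sum is exactly as hard as the $\kn$ conjecture itself, now for positive semidefinite hosts, which is open for most $H$. The realistic targets are therefore (i) to establish Conjecture~\ref{regular-KNRS} for every $H$ already known to be $\kn$, by verifying that the existing arguments localize to positive semidefinite hosts --- where, conveniently, the pendant-tree contributions they must otherwise bound now vanish outright --- and (ii) in the cases needed here, to verify it for the subdivided graphs directly, where the abundance of degree-$2$ vertices makes essentially every surviving $F$ a disjoint union of cycles, keeping the sum under control. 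Together with the trivial observation that $\kn$ implies Conjecture~\ref{regular-KNRS} (a near-regular dense graph is in particular a dense graph), this is what I expect to be used in the sequel.
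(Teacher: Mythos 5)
There is nothing to compare your attempt against: the statement you were given is Conjecture~\ref{regular-KNRS}, i.e.\ Conjecture~1.8 of Brada{\v c}, Sudakov and Wigderson, which is quoted in this paper as an \emph{open conjecture} and is not proved here (nor anywhere). Your proposal is, by your own admission, not a proof either: after the reduction to positive semidefinite hosts and the expansion $t(H,W)=\sum_{F\subseteq E(H)}d^{\,e(H)-|F|}t(F,U)$ with $U=W-dJ$, $U\succeq 0$, $U\mathbf 1=0$, the remaining task --- showing $\sum_{F\neq\varnothing,\ \delta(F)\ge 2}d^{\,e(H)-|F|}t(F,U)\ge 0$ --- is exactly the content of the conjecture in its hardest form, and you correctly note that individual terms $t(F,U)$ can be negative for $F$ other than cycles (theta graphs, $K_4$, \dots), so no term-by-term argument closes it. That is the genuine gap, and it is not a fixable oversight: a complete proof for general $H$ would resolve an open problem. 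Your preliminary reductions are essentially sound (the equivalence, for $d$-regular graphons, of local density with $W-dJ\succeq 0$ follows from Lemma~\ref{Reiher} applied to $[0,1]$-valued test functions together with invariance of the quadratic form under shifts by constants, since $U\mathbf 1=0$; and the vanishing of all terms with a degree-one vertex is correct), and this positive-semidefinite viewpoint is indeed the one Brada{\v c} et al.\ use to prove their \emph{partial} results (their Theorem~1.9); but it does not yield the full conjecture.

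For context on how the statement actually functions in this paper: Conjecture~\ref{regular-KNRS} is only background. The paper records the implications $\kn\Rightarrow$ weakly-KNRS $\Rightarrow$ regular-KNRS (the last via the tensor-power trick, i.e.\ Proposition~4.10 of \cite{BSW}), and then proves its unconditional results about subdivisions (Theorems~\ref{even sub} and \ref{regular sub}) by a different route: Lemma~\ref{lem:transform} rewriting $t(H^{(s)},W)=t(H,W_{s+1})$, restriction to a large subset on which $W_{P_k}$ is bounded below (via restricting graphons and Lemma~\ref{sub locally dense}), and Reiher-type inequalities (Lemmas~\ref{Reiher} and \ref{extend-Reiher}), rather than the $dJ+U$ expansion. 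So your item (ii) intuition --- that subdivided graphs are tractable because of their many degree-two vertices --- is in the right spirit, but the arguments used in the paper avoid the PSD expansion altogether and, accordingly, never need (and never establish) Conjecture~\ref{regular-KNRS} itself.
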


Following the notation in \cite{BSW} we say a graph $H$ is {\it regular-$\kn$} if $H$ satisfies Conjecture~\ref{regular-KNRS}.
By employing the theory of positive semidefinite matrices,
Brada{\v c}, Sudakov and Wigderson (\cite{BSW}, Theorem~1.9) verified the following graphs that exhibit the regular-$\kn$ property:
\begin{itemize}
\item[(a)] the $\ell$-subdivision of any regular-KNRS graph for any integer $\ell\geq 1$,
\item[(b)] all graphs obtained from a regular-KNRS graph by gluing forests to its vertices,
\item[(c)] all generalized theta graphs, and
\item[(d)] the graph with vertex set $\mathbb{Z}_6$ and edge set $\{(i,i+1):1\leq i\leq 6\}\cup \{(1,5),(2,4)\}$.\footnote{This is the smallest open case of Conjecture~\ref{knrs} (see~\cite{L21} for more discussions).}
\end{itemize}
As pointed out by the authors of \cite{BSW}, item (a) implies that all balanced subdivisions of cliques are regular-KNRS.
In an earlier but related result, Conlon, Kim, Lee and Lee~\cite{CKLL18} showed that if a graph $H$ is $\kn$, then the $1$-subdivision of $H$ is Sidorenko.


\subsection{Weakly-KNRS}
Before presenting our results, we introduce a novel property that serves as an intermediate concept between the $\kn$ property and the regular-$\kn$ property.

\begin{myDef}[Weakly-KNRS graphs]
A graph $H$ is weakly-KNRS, if there exists some constant $c_H\in (0,1]$ such that for every real $d>0$, there exists a $\rho=\rho(d,H)>0$ such that  $t(H,G)\geq c_H\cdot d^{|E(H)|}$ holds for every sufficiently large $(\rho,d)$-dense graph $G$.
\end{myDef}
It is obvious that if a graph $H$ is KNRS, then $H$ is weakly-KNRS.
On the other hand, by utilizing the so-called tensor power trick, one can demonstrate that if $H$ is weakly-KNRS, then $H$ is regular-KNRS.
This can be formally derived from the following equivalent formulation of Proposition 4.10 of \cite{BSW}: For any graph $H$, if there exists an absolute constant $c>0$ such that for every real $d>0$, there exists a $\rho=\rho(d,H,c)>0$ satisfying $t(H,G)\geq c\cdot d^{|E(H)|}$ for every sufficiently large $(\rho,d)$-dense $(\rho,d)$-nearly-regular graph $G$, then the same inequality holds with $c=1$.

We refer to Figure 1 for an illustration of the relationship between Sidorenko graphs, KNRS graphs, weakly-KNRS graphs, and regular-KNRS graphs.

\begin{figure}[htbp]
\centering
\includegraphics[width=0.8\textwidth]{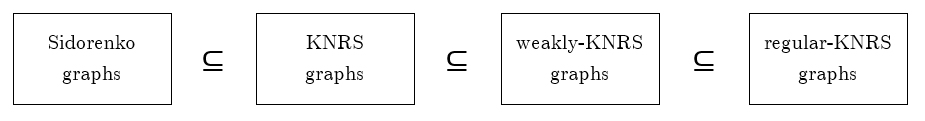}
\caption{An illustration among families of graphs}
\label{Figure 1:}
\end{figure}

\subsection{Our results}
In this paper, we mainly investigate the above conjectures for subdivisions of graphs.
For a comprehensive comparison with our findings, we recall two aforementioned results:
\begin{itemize}
\item Brada{\v c} et al. \cite{BSW} proved that if $H$ is regular-KNRS, then $H^{(\ell)}$ is also regular-KNRS;
\item Conlon et al. \cite{CKLL18} proved that if $H$ is $\kn$, then its $1$-subdivision $H^{(1)}$ is Sidorenko.
\end{itemize}
Note that repeatedly applying the above result of Conlon et al. \cite{CKLL18}, it can be inferred easily that for any $k\geq 1$,
if $H$ is $\kn$, then its $(2^k-1)$-subdivision $H^{(2^k-1)}$ is Sidorenko.

Now, we proceed to present our first result.
Assuming that $H$ satisfies the $\kn$ property, we strengthen the above result of Brada{\v c} et al. \cite{BSW}, and extend the result of Conlon et al. \cite{CKLL18} from $(2^k-1)$-subdivisions to $\ell$-subdivisions for all odd integers $\ell$.

\begin{thm}\label{even sub}
Let $k\geq 1$ be integers. If a graph $H$ is $\kn$, the following statements hold:
\begin{itemize}
\item[(a)] the $(2k-1)$-subdivision $H^{(2k-1)}$ is Sidorenko, and
\item[(b)] the $2k$-subdivision $H^{(2k)}$ is weakly-KNRS.
\end{itemize}
\end{thm}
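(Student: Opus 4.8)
Fix a host graph $G$ on $n$ vertices with adjacency matrix $A$, and for $m\ge 1$ let $W_m=W_m(G)$ be the graphon (weighted graph with uniform vertex weights) on $V(G)$ whose weight between $x$ and $y$ is $(A^m)_{xy}/n^{m-1}$, i.e.\ $n^{1-m}$ times the number of walks of length $m$ from $x$ to $y$; since this count is at most $n^{m-1}$, indeed $W_m\in[0,1]$. Grouping the homomorphisms of a subdivided edge by first choosing where its two endpoints go and then summing over its internal vertices yields, for every $k\ge 1$,
\[
t\bigl(H^{(2k-1)},G\bigr)=t\bigl(H^{(1)},W_k(G)\bigr),\qquad t\bigl(H^{(2k)},G\bigr)=t\bigl(H,W_{2k+1}(G)\bigr),
\]
and moreover the edge density of $W_m(G)$ equals $t(P_{m+1},G)$, the homomorphism density of the path with $m$ edges. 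Since paths are Sidorenko, $t(P_{m+1},G)\ge t(K_2,G)^{m}$ for every $G$ (and $\ge d^{m}$ when $G$ is $(\rho,d)$-dense, because then $t(K_2,G)\ge d$). Thus items (a) and (b) reduce to feeding $W_k(G)$, respectively $W_{2k+1}(G)$, into a known statement for $H$.

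\textbf{Proof of (a).} Since $H$ is $\kn$, the theorem of Conlon, Kim, Lee and Lee~\cite{CKLL18} tells us $H^{(1)}$ is Sidorenko, and a Sidorenko graph satisfies $t(H^{(1)},U)\ge t(K_2,U)^{|E(H^{(1)})|}$ for \emph{every} graphon $U$ (Sidorenko's conjecture for graphs and for graphons being equivalent). Applying this with $U=W_k(G)$ and using the reduction above,
\[
t\bigl(H^{(2k-1)},G\bigr)=t\bigl(H^{(1)},W_k(G)\bigr)\ge t(P_{k+1},G)^{2|E(H)|}\ge t(K_2,G)^{2k|E(H)|}=t(K_2,G)^{|E(H^{(2k-1)})|},
\]
so $H^{(2k-1)}$ is Sidorenko; note this used no hypothesis on $G$.

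\textbf{Proof of (b).} Let $G$ be $(\rho,d)$-dense and put $N:=W_{2k+1}(G)$, so that $t(H^{(2k)},G)=t(H,N)$ and it suffices to bound $t(H,N)\ge c_H\,d^{(2k+1)|E(H)|}=c_H\,d^{|E(H^{(2k)})|}$ for some $c_H=c_H(H,k)>0$. The plan is to apply the $\kn$ property of $H$ to the host $N$. For that one would establish a \emph{local-density transfer}: there are $\rho'=\rho'(\rho,k)$ and an absolute constant $c_0=c_0(k)>0$ such that $N$ is $(\rho',c_0 d^{2k+1})$-dense, i.e.\ $\sum_{x,y\in Y}N_{xy}\ge \tfrac{c_0}{2}d^{2k+1}|Y|^2$ whenever $|Y|\ge\rho' n$. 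Granting this, the $\kn$ property of $H$ (in its graphon form, equivalent to the graph version) gives $t(H,N)\ge(1-\varepsilon)\bigl(c_0 d^{2k+1}\bigr)^{|E(H)|}$, which is the desired bound with $\varepsilon=\tfrac12$ and $c_H=\tfrac12 c_0^{|E(H)|}$; hence $H^{(2k)}$ is weakly-$\kn$. To prove the transfer, write $\sum_{x,y\in Y}N_{xy}=n^{-2k}\,\mathbf 1_Y^{\top}A^{2k+1}\mathbf 1_Y=n^{-2k}\,v^{\top}Av$ with $v:=A^{k}\mathbf 1_Y\ge 0$; one then needs (i) a weighted local-density inequality $v^{\top}Av\gtrsim d\,\|v\|_1^{2}$ for $(\rho,d)$-dense $G$ and nonnegative $v$ that is not too concentrated, together with (ii) the lower bound $\|v\|_1=\sum_{x\in Y}(A^k\mathbf 1)_x\gtrsim d^{k}|Y|\,n^{k}$ for the number of length-$k$ walks starting in $Y$.

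\textbf{The main difficulty.} Everything above is bookkeeping except steps (i) and (ii), whose point is that the implied constants may depend on $k$ (and $H$) but \emph{not} on $\rho$ or $d$. This is genuinely delicate: a $(\rho,d)$-dense graph need not be close to regular, it may have up to $\Theta(\rho n)$ vertices of degree much smaller than $dn$, and the level sets of the walk vector $v$ may be concentrated, so a naive pointwise estimate loses a factor of $\rho$ --- which would yield only a $\rho$- and $d$-dependent ``constant'', too weak for the weakly-$\kn$ conclusion. I expect this to be handled either by combining the weighted form of local density with a dyadic decomposition of the walk vectors, or by first passing to a Szemer\'edi-regular model of $G$ and reducing the walk counts to a weighted reduced graph that behaves regularly; the constant loss incurred here is precisely what forces (b) to give only weakly-$\kn$ rather than full $\kn$, in contrast to (a), which is essentially a reduction to the theorem of Conlon, Kim, Lee and Lee.
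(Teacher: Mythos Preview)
Your argument is correct but takes a different route from the paper. You factor $H^{(2k-1)}$ as the $1$-subdivision of $H$ viewed in the walk graphon $W_k$, invoke the Conlon--Kim--Lee--Lee theorem that $H^{(1)}$ is Sidorenko as a black box, and finish with Sidorenko for paths. The paper instead argues from scratch: it reduces to a $d$-regular host graphon via Lemma~\ref{Sido-regular}, shows by Cauchy--Schwarz that $W_{2k}$ is $d^{2k}$-locally dense, and applies the $\kn$ property of $H$ directly to $W_{2k}$. Your route is shorter once CKLL is granted; the paper's is self-contained and in effect reproves the CKLL step uniformly in $k$.

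\textbf{Part (b).} Here there is a genuine gap: your step (ii), $\|A^k\mathbf 1_Y\|_1\gtrsim d^k|Y|\,n^k$, is false, and with it the claimed local-density transfer for $N=W_{2k+1}$. Take $W$ equal to $1$ on $I_1^2\cup I_2^2$ and $0$ on $I_1\times I_2$, with $|I_1|=1-\delta$ and $|I_2|=\delta$. Since $a^2+b^2\ge\tfrac12(a+b)^2$, this $W$ is $\tfrac12$-locally dense (equivalently, the graph of two disjoint cliques of sizes $(1-\delta)n$ and $\delta n$ is $(\rho,\tfrac12)$-dense for every $\rho>0$). For $x\in I_2$ one has $W_{P_k}(x)=\delta^k$, so
\[
\int_{I_2\times I_2}W_{2k+1}(x,y)\,dx\,dy=\delta^{2k+2},
\]
while $c_0 d^{2k+1}|I_2|^2=c_0\,2^{-(2k+1)}\delta^2$; for small $\delta$ this fails for every fixed $c_0>0$. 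In graph language, the number of length-$k$ walks starting in $Y=I_2$ is $(\delta n)^{k+1}$, not $\gtrsim d^k|Y|\,n^k$. Since this example is $(\rho,\tfrac12)$-dense for \emph{all} $\rho$, shrinking $\rho$ does not shrink the obstruction: $W_{2k+1}$ is not $(\rho',c_0 d^{2k+1})$-dense once $\rho'<\delta$, so you cannot make $\rho'$ as small as the black-box threshold $\rho_H$ coming from the $\kn$ property of $H$.

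The paper's fix is precisely to excise the low-walk vertices before transferring local density: set $A=\{x:W_{P_k}(x)\ge(d/2)^k\}$, prove $|A|\ge\tfrac12$ via path-Sidorenko applied inside $\bar A$, and show that the \emph{restricted} graphon $W_{2k+1}[A]$ is $d^{2k+1}/2^{2k}$-locally dense. On $A$ your steps (i) and (ii) both go through --- (i) is Reiher's inequality (Lemma~\ref{Reiher}) and (ii) holds tautologically by the definition of $A$. Applying the $\kn$ property of $H$ to $W_{2k+1}[A]$ and paying the restriction cost $|A|^{v(H)}\ge 2^{-v(H)}$ gives the weakly-$\kn$ bound. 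A dyadic decomposition or a regularity argument is not what is needed here; a single threshold cut on $W_{P_k}$ suffices, and the factor it costs is exactly why (b) yields only weakly-$\kn$.
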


We would like to point out that from the proof of this result, it can be shown that every balanced subdivision of a weakly-KNRS graph is also weakly-KNRS.

Our second result establishes that if one imposes an additional condition that $H$ is a regular graph,
then its $2k$-subdivision $H^{(2k)}$ becomes $\kn$.

\begin{thm}\label{regular sub}
Let $k\geq 1$ be integers. If a graph $H$ is $\kn$ and regular, then its $2k$-subdivision $H^{(2k)}$ is also $\kn$.
\end{thm}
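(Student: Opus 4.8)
The plan is to build on Theorem~\ref{even sub}(b) --- which shows that $H^{(2k)}$ is weakly-KNRS, hence also regular-KNRS by the reformulation of Proposition~4.10 of~\cite{BSW} recorded after the definition of weakly-KNRS --- and to promote the ``nearly-regular host'' conclusion to the full conclusion of Conjecture~\ref{knrs}, using the extra hypothesis that $H$ is regular. Suppose $H$ is $r$-regular on $n_H$ vertices, so $|E(H)|=rn_H/2$, $|V(H^{(2k)})|=n_H+2k|E(H)|$, and $|E(H^{(2k)})|=(2k+1)|E(H)|$. Fix $d,\varepsilon>0$ and let $G$ be a large $(\rho,d)$-dense graph on $n$ vertices with adjacency matrix $A$. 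It would then suffice to reduce the task of counting $H^{(2k)}$ in $G$ to the analogous task for a host that is additionally $(\rho',d')$-nearly-regular with $d'$ essentially $d$; the regularity of $H$ is what I expect to make this reduction possible.

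First I would rewrite
\[
\hom(H^{(2k)},G)=\sum_{\phi\colon V(H)\to V(G)}\ \prod_{uv\in E(H)}\bigl(A^{2k+1}\bigr)_{\phi(u)\phi(v)},
\]
so that $t(H^{(2k)},G)=t(H,W)$ exactly, where $W$ is the weighted graph on $V(G)$ with edge weights $(A^{2k+1})_{xy}/n^{2k}$. To see the role of regularity, it helps to delete from the length-$(2k+1)$ path replacing each edge $e=uv$ of $H$ its single central edge: since every branch vertex of $H^{(2k)}$ has degree $r$, what remains is a disjoint union of $n_H$ pairwise isomorphic ``spiders'' (a centre with $r$ legs of length $k$) --- this is where the even subdivision parameter $2k$ (equal lengths $k$ on either side of one central edge) and the regularity of $H$ enter together. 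Under this decomposition $W$ is realised by a transfer ``walk of length $k$ in $G$ -- edge of $G$ -- walk of length $k$ in $G$'', and the per-vertex data that control the structure are the walk-counts $w_k(x)=(A^{k}\mathbf{1})_x$.

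Second I would analyse the density of $W$. Applying the Blakley--Roy inequality (the number of walks of length $\ell$ in an $N$-vertex graph of average degree $\bar d$ is at least $N\bar d^{\,\ell}$) inside $G[X]$ for every large $X\subseteq V(G)$, together with $e(G[X])\ge\tfrac d2|X|^2$ from $(\rho,d)$-density, bounds the weight of $W$ inside $X$ from below --- but only up to a factor $(|X|/n)^{2k}$, which is what limits Theorem~\ref{even sub}(b) to a constant-fraction statement. The new ingredient is to use the regularity of $H$ to \emph{normalize} first: replace the uniform average over the $\phi$-images of branch vertices by the average against a suitably chosen weighting of $V(G)$ (morally $\propto w_k$), which turns each branch-vertex factor into a homogeneous symmetric $r$-ary form and, for the right choice, does not decrease $\hom(H^{(2k)},G)$ while keeping the host $(\rho'',d'')$-dense. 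After normalization the Blakley--Roy step applies with near-equality, so the normalized host is $(\rho',d')$-nearly-regular and $W$ becomes genuinely $(\rho',d')$-locally-dense with edge-density at least $(1-\varepsilon)d^{2k+1}$. One then invokes the hypothesis that $H$ is KNRS (Conjecture~\ref{knrs} applied to $W$, or equivalently to the normalized nearly-regular host via Conjecture~\ref{regular-KNRS}) to obtain $t(H,W)\ge(1-\varepsilon)\bigl(d^{2k+1}\bigr)^{|E(H)|}$, and unwinding the reductions gives $t(H^{(2k)},G)\ge(1-\varepsilon')\,d^{(2k+1)|E(H)|}=(1-\varepsilon')\,d^{|E(H^{(2k)})|}$ after adjusting the constants.

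I expect the main obstacle to be the second step: making the normalization rigorous --- verifying that replacing the uniform branch-vertex average by the weighted one cannot decrease $\hom(H^{(2k)},G)$ (this should follow from convexity/Maclaurin-type inequalities for symmetric forms, using that every branch vertex is incident to exactly $r$ legs, so that regularity of $H$ is essential rather than cosmetic here) and that it preserves $(\rho,d)$-density up to lower-order terms. This is exactly the gap between Theorem~\ref{even sub}(b) and Theorem~\ref{regular sub}. The matrix-power bookkeeping, the Blakley--Roy estimates, the tensor-power self-improvement, and the closing appeal to KNRS-ness of $H$ should all be routine once the normalization is in hand.
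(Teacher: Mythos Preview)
Your structural instinct is right---the key is to normalize the length-$(2k+1)$ walk kernel by the length-$k$ walk counts $w_k(x)=W_{P_k}(x)$, and $\Delta$-regularity of $H$ is exactly what makes each branch vertex pick up the factor $w_k(x)^\Delta$---but the mechanism you propose for completing the argument has a genuine gap, and the detour through nearly-regular hosts and regular-KNRS is not how the argument closes. The normalization is \emph{not} an inequality (``does not decrease $\hom$'') to be justified by Maclaurin-type convexity; it is an exact factorization. In graphon language, set $W'_{2k+1}(x,y):=W_{2k+1}(x,y)/\bigl(W_{P_k}(x)W_{P_k}(y)\bigr)$ (zero where the denominator vanishes; that set has measure zero since $W$ is $d$-locally dense). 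Because $H$ is $\Delta$-regular,
\[
t(H^{(2k)},W)=t(H,W_{2k+1})=\int_{[0,1]^{v(H)}}\prod_{ij\in E(H)}W'_{2k+1}(x_i,x_j)\prod_{v\in V(H)}\bigl(W_{P_k}(x_v)\bigr)^{\Delta}\,dx_v,
\]
an identity, not a one-sided bound.

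The crucial point you are missing is that $W'_{2k+1}$ is $d$-locally dense---with the \emph{original} $d$, not $d^{2k+1}$. This follows from Reiher's inequality (Lemma~\ref{Reiher}) applied with $f(z)=\int_A W_k(x,z)/W_{P_k}(x)\,dx$, for which $\|f\|_1=|A|$ exactly; the division by $W_{P_k}$ is precisely what eliminates the $(|X|/n)^{2k}$ loss that caps your Blakley--Roy argument (and Theorem~\ref{even sub}(b)) at weakly-KNRS. With $W'_{2k+1}$ $d$-locally dense, the finishing tool is not regular-KNRS but Lemma~\ref{extend-Reiher} (the weighted KNRS inequality, Lemma~2.10 of~\cite{BSW}) with weight $\omega=(W_{P_k})^{\Delta}$:
\[
t(H^{(2k)},W)\ \ge\ d^{e(H)}\Bigl(\int (W_{P_k})^{\Delta}\Bigr)^{v(H)}\ \ge\ d^{e(H)}\Bigl(\int W_{P_k}\Bigr)^{\Delta v(H)}\ \ge\ d^{e(H)}\cdot d^{k\Delta v(H)}=d^{(2k+1)e(H)},
\]
using Jensen for the middle step, $t(P_k,W)\ge d^k$ for the last, and $\Delta v(H)=2e(H)$. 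No nearly-regular reduction, no tensor-power self-improvement, and no Maclaurin-type inequalities are needed.
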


It is important to highlight the notable distinction between this result and item (a) of Theorem 1.9 in \cite{BSW}.
In the former, we impose the regular condition on the graph $H$, whereas in item (a), the (almost) regular condition applies to the host graph $G$.

Using both Theorems~\ref{even sub} and \ref{regular sub}, we can readily derive the following corollary.

\begin{cor}\label{balance sub}
If a graph $H$ is $\kn$ and regular, then all balanced subdivisions of $H$ are $\kn$.
In particular, all balanced subdivisions of cliques are $\kn$.
\end{cor}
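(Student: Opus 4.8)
The plan is to reduce the corollary to Theorems~\ref{even sub} and~\ref{regular sub} by splitting on the parity of the subdivision length. Every balanced subdivision of $H$ is $H^{(\ell)}$ for some integer $\ell\geq 0$; the case $\ell=0$ is exactly the hypothesis that $H$ is $\kn$, so we may assume $\ell\geq 1$ and write either $\ell=2k$ or $\ell=2k-1$ with $k\geq 1$. In the even case $\ell=2k$, since $H$ is both $\kn$ and regular, Theorem~\ref{regular sub} directly yields that $H^{(2k)}$ is $\kn$, and we are done.

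The odd case $\ell=2k-1$ is where a small observation is needed: $H^{(2k-1)}$ is always bipartite. Indeed, each edge of $H$ is replaced by a path of length $2k$, which is even; since every cycle of $H^{(2k-1)}$ is a subdivision of a cycle of $H$, all its cycles have even length. (Equivalently, one properly $2$-colours $V(H^{(2k-1)})$ by placing all branch vertices, i.e.\ the original vertices of $H$, in one colour class and colouring each internal path vertex by the parity of its distance along its path; this is consistent precisely because the path lengths are even.) Consequently $H^{(2k-1)}$ is a legitimate candidate for Sidorenko's conjecture, and Theorem~\ref{even sub}(a) (applied with this $k$) shows it is Sidorenko. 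Finally, as recalled in the introduction, every Sidorenko graph is $\kn$: for a $(\rho,d)$-dense graph $G$ one has $t(K_2,G)\geq d$, hence
\[
t\!\left(H^{(2k-1)},G\right)\ \geq\ t(K_2,G)^{|E(H^{(2k-1)})|}\ \geq\ d^{|E(H^{(2k-1)})|},
\]
so $H^{(2k-1)}$ is $\kn$. Combining the two cases proves the first assertion of Corollary~\ref{balance sub}. For the ``in particular'' clause, note that $K_n$ is $(n-1)$-regular and, being a complete multipartite graph (all parts of size one), is $\kn$ by~\cite{KNRS10}; applying the first assertion with $H=K_n$ shows that every balanced subdivision of $K_n$ is $\kn$.

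I do not expect a genuine obstacle here: all the real work has been carried out in Theorems~\ref{even sub} and~\ref{regular sub}. The only points requiring care are the elementary parity observation that $H^{(2k-1)}$ is bipartite (so that invoking Sidorenko's conjecture is meaningful and that one does not need the full $\kn$ machinery in the odd case) and the standard implication that Sidorenko graphs are $\kn$; both are routine. If anything, the mild subtlety worth flagging is that the regularity hypothesis on $H$ is used \emph{only} in the even case via Theorem~\ref{regular sub}, whereas the odd case follows from Theorem~\ref{even sub}(a) without it.
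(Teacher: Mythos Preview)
Your proposal is correct and is exactly the derivation the paper intends: the corollary is stated without a written proof, merely with the remark that it follows readily from Theorems~\ref{even sub} and~\ref{regular sub}, and your parity split together with the observation that Sidorenko implies $\kn$ is precisely how one fills in the details. The bipartiteness check for $H^{(2k-1)}$ and the appeal to~\cite{KNRS10} for cliques are both fine (and, strictly speaking, the former is already implicit in the statement of Theorem~\ref{even sub}(a)).
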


This improves a recent result of Brada{\v c}, Sudakov and Wigderson  \cite{BSW}, stating that all balanced subdivisions of cliques are regular-$\kn$.

This paper is organized as follows.
In Section 2, we provide necessary notations and some auxiliary lemmas.
In Section 3, we present the proofs of Theorems~\ref{even sub} and \ref{regular sub}.
Finally, in Section 4, we conclude with some remarks.

\section{Preliminaries}
In this section, we introduce terminologies and prepare some auxiliary lemmas used throughout the paper.
We start with some basic notions of graphs.
Let $G$ be a graph. The vertex set of $G$ is denoted by $V(G)$ and the edge set of $G$ is denoted by $E(G)$.
Let $v(G)$ and $e(G)$ be the number of vertices and edges of $G$, respectively.
Let $k$ be a positive integer.
The path with $k$ edges is denoted by $P_k$ and the complete graph with $k$ vertices is denoted by $K_k$.

In line with current trends and for the sake of notational convenience,
the subsequent section presents essential analytic terminologies needed in this paper.
All integrations in this paper are taken with respect to the Lebesgue measure and when we use the notion of measure we always mean the Lebesgue measure on $\mathbb{R}^n$.
Let $A$ be a measurable set of $\mathbb{R}^n$. We write $|A|$ for the measure of $A$.
If $|A|=0$, then we call $A$ a {\it zero-measure} set.
For a measurable function $f:[0,1]\rightarrow \mathbb{R}$, the $L_1$ norm $||f||_1$ of $f$ is defined by $||f||_1:=\int_{[0,1]}|f(x)|dx$.

\subsection{Graph limits}
In this paper, we work within the framework of graph limits, drawing upon the language and concepts associated with this field.
(For a comprehensive introduction to graph limits, we refer readers to~\cite{LL12}.)
A {\it kernel} is a measurable function $U:[0,1]^2\rightarrow \mathbb{R}$ satisfying $U(x,y)=U(y,x)$ for all $x,y\in [0,1]$.
A {\it graphon} is a kernel $W:[0,1]^2\rightarrow [0,1]$.
Roughly speaking, a graphon can be regarded as the limit of regularity partitions that approximate large graphs.
Consequently, numerative graphic parameters can be naturally extended to quantities defined on graphons.

As a graphon analogy of the vertex degree in a graph, the {\it degree function} of $x\in [0,1]$ in a graphon $W$ is defined as $${\rm deg}_W(x):=\int_{[0,1]}W(x,y)dy.$$
If ${\rm deg}_W(x)=d$ for all $x\in [0,1]$ except for a zero-measure subset $X\subseteq [0,1]$, then we say that $W$ is {\it $d$-regular}.
The {\it homomorphism density} of a graph $H$ in a graphon $W$ is defined as
$$t(H,W):=\int_{[0,1]^{v(H)}}{\prod_{ij\in E(H)}U(x_i,x_j)\prod_{v\in V(H)}dx_{v}}.$$
Specially, we call $t(K_2,W)$ the {\it density} of the graphon $W$.
Now we can restate Sidorenko's conjecture in the language of graph limits (see e.g. \cite{LL12}, Chapter 7 for details).

\begin{conj}[Sidorenko's conjecture, graphon form]\label{Sido limits}
Let $H$ be a bipartite graph. Then $t(H,W)\geq t(K_2,W)^{e(H)}$ holds for any graphon $W$.
\end{conj}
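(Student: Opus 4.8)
The assertion is precisely the graphon reformulation of Sidorenko's conjecture (Conjecture~\ref{Sidorenko}), and a proof valid for all bipartite $H$ is not known; accordingly the plan is to prove $t(H,W)\ge t(K_2,W)^{e(H)}$ for the families of bipartite $H$ that are within reach of the standard toolbox, and to make explicit where the difficulty resides. Write $p:=t(K_2,W)=\int_{[0,1]^2}W$. I will use that a graphon $W$, viewed as a Hilbert--Schmidt self-adjoint operator on $L^2[0,1]$, admits a spectral decomposition with real eigenvalues $\lambda_1,\lambda_2,\dots$ and orthonormal eigenfunctions, so that $\mathbf 1=\sum_i c_i\phi_i$ with $\sum_i c_i^2=1$.

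The base family is that of forests. For a path $P_\ell$ one has $t(P_\ell,W)=\langle\mathbf 1,W^\ell\mathbf 1\rangle=\sum_i c_i^2\lambda_i^\ell$ while $p=\langle\mathbf 1,W\mathbf 1\rangle=\sum_i c_i^2\lambda_i$, so the required bound $\sum_i c_i^2\lambda_i^\ell\ge\big(\sum_i c_i^2\lambda_i\big)^\ell$ is Jensen's inequality for the probability vector $(c_i^2)_i$ and the convex map $t\mapsto t^\ell$ when $\ell$ is even, and follows by a short extra averaging argument (this is the Blakley--Roy inequality) when $\ell$ is odd. A gluing step --- if $H_1$ and $H_2$ are Sidorenko then so is any graph formed by identifying a single vertex of $H_1$ with one of $H_2$, and for forests this can be arranged so that each merge costs exactly one factor $p$ --- then upgrades this to all forests. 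This settles $K_2$ itself and, most relevantly here, every subdivided edge $P_\ell$.

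For a bipartite $H$ with parts $A\sqcup B$ that is not a forest, the workhorse is a Cauchy--Schwarz argument combined with the tensor-power trick: viewing $t(H,W)$ as the expectation over the $A$-coordinates of a product of integrals over the $B$-coordinates, one applies Cauchy--Schwarz along a symmetry (a ``folding'') of $H$ so as to lower-bound $t(H,W)$ by $t(H'',W)$ for a graph $H''$ structurally closer to a forest --- more symmetric, or with a vertex complete to one side, or decomposed into strictly smaller Sidorenko pieces --- and iterates down to a known base case. This is exactly the mechanism behind the known cases: bipartite graphs with a vertex adjacent to all of the other side, weakly norming graphs, tree-like gluings of Sidorenko graphs, and --- as in Conlon--Kim--Lee--Lee~\cite{CKLL18} --- the $1$-subdivision of any $\kn$ graph. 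A parallel route is Szegedy's entropy method: one builds a probability measure on maps $V(H)\to[0,1]$ supported on ``fractional homomorphisms'', evaluates its differential entropy by the chain rule along a good vertex order, and compares it term by term with the entropy of the product measure realising the random graph, the desired inequality being equivalent to the latter entropy being maximal.

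The main obstacle is precisely the step the structured cases exploit: for an \emph{arbitrary} bipartite $H$ there is no known choice of Cauchy--Schwarz folding (equivalently, no known good entropy ordering and no known decomposition into established Sidorenko pieces), which is why the conjecture remains open, the smallest open instance being $K_{5,5}\setminus C_{10}$ mentioned earlier. For the purposes of the present paper one only appeals to the conjecture for forests (the $P_\ell$ case above) and, via~\cite{CKLL18}, for the $1$-subdivision of a $\kn$ graph; Theorem~\ref{even sub}(a) then bootstraps this last case to the $(2k-1)$-subdivision of any $\kn$ graph.
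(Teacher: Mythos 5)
The statement you were asked to prove is a conjecture, not a theorem: Conjecture~\ref{Sido limits} is simply Sidorenko's conjecture (Conjecture~\ref{Sidorenko}) transcribed into graphon language, and the paper neither proves it nor claims to --- it only records the standard equivalence of the graph and graphon formulations and later invokes known special cases. You recognized this correctly, and your write-up does the only reasonable thing: a genuine proof of the cases within reach (paths and forests, via the spectral decomposition of the kernel operator, Jensen for even path lengths, and Blakley--Roy for odd ones), together with an honest account of why the general bipartite case is open. This also matches how the statement is actually used in the paper: the only Sidorenko-type facts the proofs rely on are that $P_k$ is Sidorenko (in Claim~($\star$), Claim~\ref{zero-measure} and \eqref{e5}) and, for context, the Conlon--Kim--Lee--Lee result~\cite{CKLL18} on $1$-subdivisions of $\kn$ graphs; your path argument covers the former and you correctly cite the latter rather than reprove it.

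One caveat: the gluing step you assert in passing --- that identifying a single vertex of two arbitrary Sidorenko graphs always yields a Sidorenko graph --- is not a known theorem in that generality. Writing $t(H,W)=\int f_1(x)f_2(x)\,dx$ with $f_i$ the densities of $H_i$ rooted at the identified vertex, one would need a positive-correlation inequality of the form $\int f_1f_2\geq \bigl(\int f_1\bigr)\bigl(\int f_2\bigr)$, which does not follow from the two Sidorenko bounds alone (Cauchy--Schwarz goes the wrong way). For forests this is harmless, since trees are Sidorenko by the classical leaf-by-leaf induction (or by Blakley--Roy plus Cauchy--Schwarz), so the portion of your argument that the paper actually needs is sound; just avoid stating the vertex-gluing claim as a general fact.
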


It can be shown that any graph $H$ satisfies Conjecture~\ref{Sidorenko} if and only if $H$ satisfies Conjecture~\ref{Sido limits}.
The next lemma shows that Sidorenko's conjecture can be further reduced to the case when the host graphon is regular.

\begin{lem}[\cite{CR21}, Theorem 8.2] \label{Sido-regular}
Let $H$ be a bipartite graph. Then $H$ is Sidorenko if and only if $t(H,W)\geq d^{e(H)}$ holds for every $d\in [0,1]$ and every $d$-regular graphon $W$.
\end{lem}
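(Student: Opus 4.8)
The plan is to treat the two directions separately, with all the content in the converse. The forward implication is immediate: if $W$ is a $d$-regular graphon then $t(K_2,W)=\int_{[0,1]}\mathrm{deg}_W(x)\,dx=d$, so a Sidorenko graph $H$ satisfies $t(H,W)\ge t(K_2,W)^{e(H)}=d^{e(H)}$, which is exactly the asserted inequality.

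For the converse, assume $t(H,W)\ge d^{e(H)}$ for all $d\in[0,1]$ and all $d$-regular graphons $W$; we must prove $t(H,U)\ge t(K_2,U)^{e(H)}$ for an arbitrary graphon $U$, which by the equivalence between the discrete and graphon forms of Sidorenko's conjecture will show that $H$ is Sidorenko. Put $d:=t(K_2,U)$, which we may assume is positive. I would argue by extremality. Fix this value $d$, let $\mathcal W_d$ denote the set of all graphons of density exactly $d$, and recall that graphon space is compact in the cut metric (see~\cite{LL12}) while $t(K_2,\cdot)$ and $t(H,\cdot)$ are cut-continuous; hence $\mathcal W_d$ is compact and $t(H,\cdot)$ attains its minimum over it. The proof then reduces to the \emph{Regularization Claim}: the minimum of $t(H,\cdot)$ over $\mathcal W_d$ is attained at some $d$-regular graphon $W_0$. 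Granting this, the hypothesis gives $t(H,W_0)\ge d^{e(H)}$, and minimality gives $t(H,U)\ge t(H,W_0)\ge d^{e(H)}=t(K_2,U)^{e(H)}$, as needed.

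To prove the Regularization Claim I would use the first variation of $t(H,\cdot)$. Suppose first that $W_0$ is a minimizer with values bounded away from $0$ and $1$. For every bounded symmetric kernel $\psi$ with $\int_{[0,1]^2}\psi=0$, the perturbation $W_0+\varepsilon\psi$ again lies in $\mathcal W_d$ for all small $\varepsilon$, so $\varepsilon\mapsto t(H,W_0+\varepsilon\psi)$ has a minimum at $\varepsilon=0$, giving
\[
\sum_{ij\in E(H)}\ \int_{[0,1]^{v(H)}}\psi(x_i,x_j)\!\!\prod_{\substack{kl\in E(H)\\ kl\ne ij}}\!\!W_0(x_k,x_l)\ \prod_{v\in V(H)}dx_v = 0.
\]
For $H=P_2$ this reads $\int_{[0,1]}\mathrm{deg}_{W_0}(x)\,\mathrm{deg}_\psi(x)\,dx=0$ with $\mathrm{deg}_\psi(x):=\int_{[0,1]}\psi(x,y)\,dy$ mean-zero, and taking $\psi(x,y)=r(x)+r(y)$ with $\int r=0$ makes $\mathrm{deg}_\psi=r$ range over all bounded mean-zero functions, forcing $\mathrm{deg}_{W_0}\equiv d$; the same works whenever $H$ has a vertex of degree one. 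For general connected $H$ the idea is to feed into the displayed identity both the additive kernels $\psi(x,y)=r(x)+r(y)$ and the rank-one kernels $\psi(x,y)=q(x)q(y)$ with $\int q=0$, and to use the nonnegativity of the coefficient kernels $(a,b)\mapsto\int\prod_{kl\ne ij}W_0$ (obtained after integrating out the other $v(H)-2$ variables) to again force $\mathrm{deg}_{W_0}$ to be a.e.\ constant.

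Two points are where I expect the genuine difficulty to lie. First, a minimizer need not be bounded away from $0$ and $1$; on the region where $W_0\in\{0,1\}$ only one-sided perturbations are admissible and stationarity weakens to a complementary-slackness inequality, so one must circumvent this — for instance by a limiting argument through the interior graphons $(1-\tfrac1n)W_0+\tfrac1n d\in\mathcal W_d$, or by a direct analysis of the contact set — in order to still obtain a $d$-regular extremizer. Second, and more substantially, the implication ``interior stationarity $\Rightarrow$ constant degree'' for an arbitrary connected $H$ does not seem to follow from the first-order identity alone (for special graphs such as $C_4$ one checks that stationarity already forces $W_0$ to be the constant graphon, but this uses the specific structure); the most promising remedy is to exhibit an explicit density-preserving perturbation of any non-$d$-regular graphon that \emph{strictly} decreases $t(H,\cdot)$, generalizing the Cauchy--Schwarz move that does this for $P_2$. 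Supplying such a perturbation — or otherwise completing the Regularization Claim — is in my view the crux of the lemma.
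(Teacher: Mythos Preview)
First, note that the paper itself does not prove this lemma at all; it is quoted as Theorem~8.2 of \cite{CR21}, so there is no in-paper argument to compare against. I will therefore assess your proposal on its own merits and against what the cited reference actually does.

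Your forward direction is fine. The converse, however, has a genuine gap --- one you yourself flag but do not close. The Regularization Claim (that the minimum of $t(H,\cdot)$ over graphons of density $d$ is attained at a $d$-regular graphon) is the entire content of the lemma, and your variational outline does not establish it. Two specific problems:
\begin{itemize}
\item Even in the interior case, stationarity does \emph{not} say that $\mathrm{deg}_{W_0}$ is constant. What the first-variation identity gives is that the kernel
\[
(a,b)\ \longmapsto\ \sum_{ij\in E(H)}\int_{[0,1]^{v(H)-2}}\ \prod_{\substack{kl\in E(H)\\ kl\ne ij}} W_0(x_k,x_l)\ \prod_{v\ne i,j}dx_v\ \Bigg|_{x_i=a,\ x_j=b}
\]
is a.e.\ constant. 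This is a condition on an ``$H$ minus an edge'' density, not on the degree function. Your $P_2$ argument works only because for $P_2$ this marginal coincides with $\mathrm{deg}_{W_0}(a)+\mathrm{deg}_{W_0}(b)$; for general $H$ there is no such identification, and nonnegativity of the coefficient kernels does not bridge the gap.
\item Your argument never uses that $H$ is bipartite. This is a red flag: the constructive regularization in \cite{CR21} depends essentially on a bipartition of $H$, and there is no reason to expect a bipartition-free variational route to succeed.
\end{itemize}

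The proof in \cite{CR21} is not extremal/variational but constructive: from an arbitrary (bi)graphon $U$ of density $d$ one explicitly manufactures a $d$-regular (indeed biregular) graphon $W$ with $t(H,W)\le t(H,U)$, by reweighting the underlying measure on one side of the bipartition by the degree function and renormalising $U$ accordingly, then repeating on the other side. The bipartiteness of $H$ is exactly what makes the comparison $t(H,W)\le t(H,U)$ go through at each step. This sidesteps entirely the compactness, boundary, and stationarity issues in your outline. If you want to complete your approach, the missing ingredient is an explicit degree-equalising perturbation that provably does not increase $t(H,\cdot)$ for bipartite $H$; once you write that down you will find you have essentially reproduced the constructive argument and the variational scaffolding becomes unnecessary.
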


We now formalize the definition of locally dense graphs in the language of graph limits, as outlined in \cite{BSW}.

\begin{myDef}
A graphon $W$ on $[0,1]^2$ is called $d$-locally dense if for every measurable set $S\subseteq [0,1]$,
we have that $\int_{S\times S}W(x,y)dxdy\geq d|S|^2.$
\end{myDef}

The next lemma, which is proved in~\cite{BSW}, translates the statement of Conjecture~\ref{knrs} from locally dense graphs to locally dense graphons.
\begin{lem}[\cite{BSW}, Lemma 2.6]\label{graphon form}
Let $H$ be a graph. Then $H$ is KNRS if and only if $t(H,W)\geq d^{e(H)}$ holds for every $d$ and every $d$-locally dense graphon $W$.
\end{lem}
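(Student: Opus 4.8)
The plan is to prove both directions of the equivalence; at heart both reduce to a dictionary between $(\rho,d)$-dense graphs and $d$-locally dense graphons. For the ``only if'' direction --- that the graphon inequality implies the $\kn$ property --- I would argue by compactness. If $H$ is not $\kn$, there are $d,\varepsilon>0$ and graphs $G_k$ with $v(G_k)\to\infty$, each $(\rho_k,d)$-dense with $\rho_k\to 0$, yet $t(H,G_k)<(1-\varepsilon)d^{e(H)}$; by compactness of the space of graphons in the cut metric a subsequence converges to a graphon $W$, and continuity of homomorphism densities gives $t(H,W)=\lim_k t(H,G_k)\le(1-\varepsilon)d^{e(H)}<d^{e(H)}$. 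It remains to check $W$ is $d$-locally dense, which is the one place where care is needed: under the measure-preserving overlay realizing cut convergence, a measurable set $S\subseteq[0,1]$ corresponds not to a vertex subset of $G_k$ but to a $[0,1]$-valued weighting $\gamma$ of $V(G_k)$ of total mass $|S|\,v(G_k)$, so I need the ``fractional'' form of $(\rho_k,d)$-density, namely $\sum_{uv\in E(G_k)}\gamma_u\gamma_v\ge\tfrac d2\bigl(|S|-\rho_k\bigr)^2v(G_k)^2$. This follows from the vertex-subset form by writing $\gamma=\int_0^1\mathbf 1_{\{\gamma>t\}}\,dt$, noting the super-level vertex sets are nested, and using an elementary inequality such as $\int_0^1(2t+1)h(t)^2\,dt\ge\bigl(\int_0^1 h\bigr)^2$ for nonincreasing $h\ge0$. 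Passing to the limit yields $\int_{S\times S}W\ge d|S|^2$, contradicting $t(H,W)\ge d^{e(H)}$.

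For the ``if'' direction --- that $\kn$ implies the graphon inequality --- suppose $H$ is $\kn$, fix a $d$-locally dense graphon $W$, and aim to show $t(H,W)\ge(1-\varepsilon)d^{e(H)}$ for every $\varepsilon>0$ (the cases $d=0$ or $e(H)=0$ being immediate). First I would upgrade the hypothesis from indicator test sets to all $[0,1]$-valued test functions via the layer-cake identity: $\int_{[0,1]^2}W(x,y)g(x)g(y)\,dx\,dy\ge d\|g\|_1^2$ for every measurable $g\colon[0,1]\to[0,1]$. Next, pass to finite graphs by sampling the $W$-random graph $G=\mathbb G(n,W)$ (drop i.i.d.\ uniform points $x_1,\dots,x_n$ and let $ij$ be an edge with probability $W(x_i,x_j)$); by the standard sampling lemma $t(H,G)\to t(H,W)$ in probability. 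The crux is that for any fixed $\rho,\varepsilon'>0$ the graph $G$ is with high probability $(\rho,d-\varepsilon')$-dense. Granting this, apply the $\kn$ property of $H$ at density $d-\varepsilon'$ with error parameter $\varepsilon/2$ to obtain the corresponding $\rho'$; then for $n$ large some realization $G^{\ast}$ is simultaneously $(\rho',d-\varepsilon')$-dense and has $t(H,G^{\ast})$ within any prescribed tolerance of $t(H,W)$, so $t(H,W)\ge(1-\varepsilon/2)(d-\varepsilon')^{e(H)}-o(1)$; letting first $\varepsilon'\to0$ and then $\varepsilon\to0$ gives $t(H,W)\ge d^{e(H)}$.

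The main obstacle is this density-transfer claim: that with high probability every $X\subseteq V(G)$ with $|X|\ge\rho n$ spans at least $(d-\varepsilon')\binom{|X|}{2}$ edges (its deterministic cousin, the fractional-density estimate invoked in the first direction, is easier but has the same flavour). The difficulty is uniformity over the exponentially many vertex subsets, compounded by the fact that $X$ may be chosen after the random points are revealed. I would handle it in two layers. First, conditioning on the points, a Chernoff bound together with a union bound over the at most $2^n$ choices of $X$ reduces the claim to the deterministic-in-$X$ statement $\mu_X:=\sum_{\{i,j\}\subseteq X}W(x_i,x_j)\ge(d-\varepsilon'/2)\binom{|X|}{2}$ for all such $X$, the $e^{-\Omega(n^2)}$ deviation bound overwhelming the $2^n$ sets. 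Second, to lower-bound $\mu_X$ uniformly, fix the block average $W_N:=\mathbb E[\,W\mid\mathcal P_N\times\mathcal P_N\,]$ over an equipartition $\mathcal P_N$ of $[0,1]$ into $N$ intervals, so that $\|W-W_N\|_1$ is arbitrarily small for $N$ large (martingale / Lebesgue-differentiation convergence); then with high probability the sampled points are evenly spread, so that the block-profile of any $X$ is a $[0,1]$-valued function $g_X$ with $\|g_X\|_1=|X|/n$, whence $\sum_{\{i,j\}\subseteq X}W_N(x_i,x_j)=\tfrac{n^2}{2}\int_{[0,1]^2}W_N g_X g_X+O(n)\ge\tfrac{n^2}{2}\bigl(d\|g_X\|_1^2-o(1)\bigr)$ by the upgraded local density, and the passage from $W_N$ back to $W$ inside $\mu_X$ costs only $o(n^2)$ since $\tfrac1{n^2}\sum_{i,j}|W-W_N|(x_i,x_j)\to\|W-W_N\|_1$. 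Choosing the parameters $N$ (hence $\|W-W_N\|_1$) and the spread tolerance small relative to $\varepsilon'\rho^2$ completes the argument.
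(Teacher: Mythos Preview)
The paper does not give its own proof of this lemma; it is quoted verbatim from \cite{BSW} (their Lemma~2.6) with the explicit remark ``which is proved in~\cite{BSW}''. There is therefore nothing in the present paper to compare your argument against.

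As for the proposal itself, it follows the standard route for such graph/graphon equivalences and is essentially sound. One cosmetic slip: you have the ``if'' and ``only if'' labels reversed. In the biconditional ``$H$ is KNRS if and only if the graphon inequality holds'', the \emph{only if} direction is ``KNRS $\Rightarrow$ graphon inequality'', not the converse; the mathematics in each paragraph is fine, only the headers are swapped.

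Two small remarks on content. In the compactness direction, the fractional upgrade of $(\rho,d)$-density you invoke is exactly the discrete precursor of what the paper records as Lemma~\ref{Reiher} (Reiher's inequality); the layer-cake argument works, and in place of the auxiliary inequality you quote one can more directly use that for nested level sets $X_t$ one has $e(X_s,X_t)\ge e(X_{\max(s,t)})\ge\tfrac d2|X_{\max(s,t)}|^2$ and integrate. In the sampling direction, your two-layer scheme (block-average $W\mapsto W_N$, then Chernoff with union bound over subsets) is one correct way to get uniform $(\rho,d-\varepsilon')$-density; a slightly cleaner alternative is to use that $\mathbb G(n,W)\to W$ in cut distance almost surely and that local $d$-density is, up to $o(1)$ slack, stable under cut-norm perturbation, which absorbs the block-approximation step.
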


The next lemma follows directly from the proof of the above lemma, which we omit here.

\begin{lem}\label{weakly-knrs-limits}
Let $H$ be a graph. Then $H$ is weakly-KNRS if and only if there exists some constant $c_H\in (0,1]$ such that $t(H,W)\geq c_H\cdot d^{e(H)}$ holds for every $d$ and every $d$-locally dense graphon $W$.
\end{lem}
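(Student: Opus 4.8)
The plan is to rerun the argument that proves Lemma~\ref{graphon form}, merely keeping track of the multiplicative constant; the two ingredients are the compactness of the space of graphons under the cut distance and the continuity of $t(H,\cdot)$ with respect to it. I would establish the two implications separately and check that the constant $c_H$ survives each of them, up to a harmless loss.

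For the ``only if'' direction, assume $H$ is weakly-KNRS with constant $c_H\in(0,1]$, fix a density $d$ and a $d$-locally dense graphon $W$, and aim to prove $t(H,W)\ge c_H\,d^{e(H)}$. For each $d'\in(0,d)$ I would use the graph approximation from the proof of Lemma~\ref{graphon form} to obtain graphs $G_n$ with $v(G_n)\to\infty$, with $G_n\to W$ in cut distance, and with $G_n$ being $(\rho_n,d')$-dense for some sequence $\rho_n\to 0$. Then weak-KNRS-ness gives $t(H,G_n)\ge c_H\,(d')^{e(H)}$ for all large $n$; letting $n\to\infty$ by continuity and then $d'\uparrow d$ yields $t(H,W)\ge c_H\,d^{e(H)}$, so this direction preserves the constant exactly.

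For the ``if'' direction, assume $t(H,W)\ge c_H\,d^{e(H)}$ holds for every $d$ and every $d$-locally dense graphon $W$, and show that $H$ is weakly-KNRS with constant $c_H/2$ by contradiction. A failure would supply a fixed $d>0$ such that, for every $\rho>0$, there are arbitrarily large $(\rho,d)$-dense graphs $G$ with $t(H,G)<\tfrac{c_H}{2}d^{e(H)}$; taking $\rho=1/n$ produces $(\rho_n,d)$-dense graphs $G_n$ with $\rho_n\to 0$, $v(G_n)\to\infty$ and $t(H,G_n)<\tfrac{c_H}{2}d^{e(H)}$. Passing to a cut-convergent subsequence $G_n\to W$, the hypothesis $\rho_n\to 0$ forces $W$ to be $d$-locally dense --- for any measurable $S$ with $|S|>0$ one has $|S|>\rho_n$ eventually, so the corresponding vertex subsets of $G_n$ certify $\int_{S\times S}W(x,y)\,dx\,dy\ge d|S|^2$ in the limit, the case $|S|=0$ being trivial --- while continuity gives $t(H,W)=\lim_n t(H,G_n)\le\tfrac{c_H}{2}d^{e(H)}<c_H\,d^{e(H)}$, contradicting the assumed bound. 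Combining the two implications proves the stated equivalence, with the graphon-side constant taken to be $c_H$ and the discrete-side constant $c_H/2$ (or vice versa).

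The only genuinely technical ingredient, and what I expect to be the main obstacle if one were to write everything out, is the transfer of local density between a graph sequence and its cut-limit: that $(\rho_n,d)$-dense graphs with $\rho_n\to 0$ converge to a $d$-locally dense graphon, and conversely that a $d$-locally dense graphon is cut-approximable by $(\rho_n,d')$-dense graphs with $\rho_n\to 0$ for every $d'<d$. This rests on rounding arbitrary measurable subsets of $[0,1]$ to unions of blocks and absorbing the resulting $o(1)$ error into the gap between $d$ and $d'$, and it is carried out verbatim in the proof of Lemma~\ref{graphon form} in \cite{BSW}; nothing beyond that proof is needed except the trivial observation that the factor $1-\varepsilon$ appearing there may be replaced throughout by an absolute constant $c_H$ (respectively $c_H/2$), which is legitimate since the definition of weakly-KNRS only asserts the existence of such a constant.
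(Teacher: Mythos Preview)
Your proposal is correct and matches the paper's intent exactly: the paper does not write out a proof but simply states that the lemma ``follows directly from the proof of the above lemma'' (i.e., Lemma~\ref{graphon form}, which is Lemma~2.6 of \cite{BSW}), and what you outline is precisely that --- rerun the compactness/continuity argument while carrying the absolute constant $c_H$ through. Your observation that one may lose a factor of $2$ in the ``if'' direction is harmless, since both the discrete and the graphon formulations are existential in the constant and the equivalence does not require the two constants to coincide.
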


\subsection{The restricting graphon on measurable closed sets}
To establish the graphon analogy of a subgraph in a graph,
we require the following notion, which can be found in \cite{KNNVW22,KVW} for further details.
Fix a graphon $W$ and a measurable closed subset $A\subseteq [0,1]$ with $|A|>0$.
Let $\mathbbm{1}_A$ be the indicator function of $A$, i.e., $\mathbbm{1}_A(x)=1$ if $x\in A$ and $\mathbbm{1}_A(x)=0$ otherwise. Let $F:[0,|A|]\rightarrow [0,1]$ be the measurable function defined as
 \begin{equation}\label{f(z)}
 F(z):= \{t\in [0,1]: \int_{[0,t]}\mathbbm{1}_A(x)dx= z\}.
 \end{equation}
Then we have $F(z)\in A$ for any $z\in [0,|A|]$, and $\int_B \mathbbm{1}_A(x)dx=|F^{-1}(B)|$ holds for every measurable subset $B\subseteq [0,1]$.
Define the graphon $W[A]$ by setting $$W[A](x,y):=W(F(x\cdot|A|),F(y\cdot|A|))$$ for all $(x,y)\in [0,1]^2$.
We call $W[A]$ the {\it restricting graphon} of $W$ on $A$.
Observe that
\begin{equation}\label{W[A]'}
t(H,W[A])=\frac{1}{|A|^{v(H)}}\int_{[0,1]^{v(H)}}\prod_{ij\in E(H)}W(x_i,x_j)\prod_{v\in V(H)}\mathbbm{1}_A(x_v)dx_v
\end{equation}
holds for every graph $H$ and every measurable closed set $A\subseteq [0,1]$ with $|A|>0$.
Moreover, it implies
\begin{equation}\label{W[A]}
t(H,W)\geq |A|^{v(H)}t(H,W[A]).
\end{equation}

In the remaining of this subsection, we aim to prove Lemma~\ref{sub locally dense} that if $W$ is $d$-locally dense,
then the restricting graphon of $W$ on any measurable closed subset is also $d$-locally dense.
To show this, we need some more notion.
Given a measurable closed set $A\subseteq [0,1]$, define the function $F(\cdot)$ by \eqref{f(z)}.
For any measurable subset $B'$ in $[0,1]$, define the subset
\begin{equation}\label{equ:B}
B=\{y\in [0,1]:~ y=F(x\cdot|A|) \text{ for some }x\in B'\}.
\end{equation}
By the definition of $F$, we see that $B$ is a measurable subset of $A$ and moreover, $|F^{-1}(B)|=\int_B\mathbbm{1}_A(x)dx=|B|$.
It then follows that $|B'|=\frac{|F^{-1}(B)|}{|A|}=\frac{|B|}{|A|}.$
Before proving Lemma~\ref{sub locally dense}, we need the coming result, whose proof can be found in Appendix~\ref{app}.

\begin{lem}\label{F(B|A|)}
Given a graphon $W$ and measurable subsets $A, B, B'$ as above, if $A$ is closed, then it holds that
$$\int_{B'\times B'}W[A](x,y)dxdy=\frac{1}{|A|^2}\int_{B\times B}W(x,y)dxdy.$$
\end{lem}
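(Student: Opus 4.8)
The plan is to unwind both sides of the claimed identity using the explicit formula \eqref{W[A]'} applied with $H=K_2$, and the dictionary between $B'$ and $B$ recorded just before the lemma. First I would write the left-hand side as
\[
\int_{B'\times B'}W[A](x,y)\,dx\,dy=\int_{[0,1]^2}W\bigl(F(x\cdot|A|),F(y\cdot|A|)\bigr)\mathbbm{1}_{B'}(x)\mathbbm{1}_{B'}(y)\,dx\,dy,
\]
and then perform the change of variables $x\mapsto x\cdot|A|$, $y\mapsto y\cdot|A|$, which introduces the Jacobian factor $\tfrac{1}{|A|^2}$ and turns the domain $[0,1]$ into $[0,|A|]$:
\[
\int_{B'\times B'}W[A](x,y)\,dx\,dy=\frac{1}{|A|^2}\int_{[0,|A|]^2}W\bigl(F(u),F(v)\bigr)\mathbbm{1}_{B'}(u/|A|)\mathbbm{1}_{B'}(v/|A|)\,du\,dv.
\]

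The second step is to recognize $F$ as a measure-preserving map from $[0,|A|]$ onto $A$ (up to a null set), which is exactly the content of the sentence ``$\int_B \mathbbm{1}_A(x)\,dx=|F^{-1}(B)|$'' and, more precisely, the pushforward statement that for any measurable $g$ on $[0,1]$, $\int_{[0,|A|]} g(F(u))\,du=\int_{A} g(y)\,dy$. Applying this with $g(y)=W(y,\cdot)\,\mathbbm{1}_{?}(\cdot)$ in each variable separately (Fubini to handle the two integrations one at a time), and using that by the definition \eqref{equ:B} of $B$ we have $\{u\in[0,|A|]:u/|A|\in B'\}=F^{-1}(B)$ up to null sets and $F$ maps this set into $B$, the integrand $\mathbbm{1}_{B'}(u/|A|)$ becomes $\mathbbm{1}_B(F(u))$. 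Pushing forward both coordinates then yields
\[
\frac{1}{|A|^2}\int_{A\times A}W(x,y)\,\mathbbm{1}_B(x)\mathbbm{1}_B(y)\,dx\,dy=\frac{1}{|A|^2}\int_{B\times B}W(x,y)\,dx\,dy,
\]
where the last equality uses $B\subseteq A$. This is the desired formula.

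The main obstacle is the careful handling of the measure-preserving property of $F$: strictly speaking $F$ need not be injective, and one must justify the pushforward/change-of-variables identity $\int_{[0,|A|]}g(F(u))\,du=\int_A g(y)\,dy$ from the given fact $\int_B\mathbbm{1}_A=|F^{-1}(B)|$ (first for indicators $g=\mathbbm{1}_B$, then for simple functions, then by monotone convergence for nonnegative measurable $g$), and to confirm that the exceptional null sets (where $F$ misbehaves, and the zero-measure discrepancy between $\{u:u/|A|\in B'\}$ and $F^{-1}(B)$) do not contribute. Everything else is a routine substitution and an application of Fubini's theorem; since these verifications are somewhat tedious, I would relegate them to Appendix~\ref{app}, as the excerpt already indicates.
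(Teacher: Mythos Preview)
Your argument is correct and rests on the same two ingredients as the paper's proof---the identity $\mathbbm{1}_{B'}(x)=\mathbbm{1}_B(F(x\cdot|A|))$ and the fact that $F$ pushes Lebesgue measure on $[0,|A|]$ forward to Lebesgue measure on $A$---but the packaging differs. You expand $W[A]$ by definition, do the linear change of variables $u=x|A|$, and then invoke the pushforward property of $F$ directly, which you rightly identify as the point requiring care. The paper avoids redoing this change of variables: it introduces the auxiliary graphon $U(x,y)=\mathbbm{1}_B(x)\mathbbm{1}_B(y)$, checks that $U[A](x,y)=\mathbbm{1}_{B'}(x)\mathbbm{1}_{B'}(y)$ and that $W[A]\cdot U[A]=(W\bigodot U)[A]$, and then applies formula~\eqref{W[A]'} with $H=K_2$ to the Hadamard product $W\bigodot U$. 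In this way the pushforward property of $F$ is absorbed into the already-stated identity~\eqref{W[A]'} rather than being re-derived. Your route is more self-contained (it makes the measure-theoretic content explicit), while the paper's Hadamard-product trick is shorter and sidesteps the approximation argument you sketch. One minor note: your opening line says you will use~\eqref{W[A]'}, but your actual computation does not---you unfold the definition of $W[A]$ instead; this is harmless but worth tidying.
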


Now we are ready to present the main result of this subsection.
\begin{lem}\label{sub locally dense}
If $W$ is a $d$-locally dense graphon, then for any measurable closed set $A\subseteq [0,1]$, the restricting graphon $W[A]$ is also $d$-locally dense.
\end{lem}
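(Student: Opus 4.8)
The plan is to deduce the local density of $W[A]$ from that of $W$ by a direct application of Lemma~\ref{F(B|A|)}. We may assume $|A|>0$, since otherwise $W[A]$ is not defined. Fix an arbitrary measurable set $S\subseteq[0,1]$; the goal is to show $\int_{S\times S}W[A](x,y)\,dx\,dy\ge d|S|^2$, which by the definition of $d$-local density is exactly what it means for $W[A]$ to be $d$-locally dense.

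First I would instantiate the constructions from \eqref{f(z)}--\eqref{equ:B} with $B':=S$, obtaining the associated set $B=\{y\in[0,1]:\ y=F(x\cdot|A|)\text{ for some }x\in S\}$. As established in the paragraph preceding Lemma~\ref{F(B|A|)}, the set $B$ is a measurable subset of $A$ with $|S|=|B'|=|B|/|A|$, that is, $|B|=|A|\cdot|S|$. Then, applying Lemma~\ref{F(B|A|)} and afterwards the hypothesis that $W$ is $d$-locally dense (used on the measurable set $B$), we get
$$\int_{S\times S}W[A](x,y)\,dx\,dy=\frac{1}{|A|^2}\int_{B\times B}W(x,y)\,dx\,dy\ge\frac{1}{|A|^2}\cdot d|B|^2=\frac{d}{|A|^2}\bigl(|A|\cdot|S|\bigr)^2=d|S|^2,$$
as desired.

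I do not expect a real obstacle in this argument once Lemma~\ref{F(B|A|)} is available: the only facts requiring care are the measurability of the auxiliary set $B$ and the exact scaling identity $|B|=|A|\cdot|S|$, both of which are already supplied by the construction in \eqref{f(z)}--\eqref{equ:B}. All of the analytic substance — the change-of-variables identity converting an integral of $W[A]$ over $S\times S$ into an integral of $W$ over $B\times B$ — is isolated in Lemma~\ref{F(B|A|)}, whose proof is deferred to the appendix, so here it genuinely suffices to invoke it and chain the two inequalities together.
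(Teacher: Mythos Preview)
Your proposal is correct and follows essentially the same approach as the paper: apply Lemma~\ref{F(B|A|)} to convert the integral of $W[A]$ over $S\times S$ into $\frac{1}{|A|^2}$ times the integral of $W$ over $B\times B$, then invoke the $d$-local density of $W$ together with the scaling relation $|B|=|A|\cdot|S|$. The paper's own proof is identical up to notation (it writes $B'$ for your $S$).
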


\begin{proof}
The case when $A$ is a zero-measure set is trivial, so we may assume that $|A|>0$.
Let $B'$ be any measurable subset in $[0,1]$ and define the subset $B\subseteq A$ by \eqref{equ:B}.
Using Lemma~\ref{F(B|A|)},
\begin{equation}
\begin{aligned}
\int_{B'\times B'}W[A](x,y)dxdy&=\frac{1}{|A|^2}\int_{B\times B}W(x,y)dxdy\geq \frac{d|B|^2}{|A|^2}=d|B'|^2,
\end{aligned}
\end{equation}
where the inequality holds because $W$ is $d$-locally dense and the last equality follows from the fact $|B'|=\frac{|B|}{|A|}$.
This implies that $W[A]$ is $d$-locally dense.
\end{proof}

\subsection{Reiher's inequality}
The following lemma proved by Reiher~\cite{Re14} has been proven to be extremely useful when considering Conjecture~\ref{knrs}.

\begin{lem}[Reiher's inequality~\cite{Re14}, Lemma 2.1]
Let $G$ be a $(\rho,d)$-dense graph with $n$ vertices. Let $f:V(G)\rightarrow [0,1]$ be a function satisfying that $\sum_{v\in V(G)}f(v)\geq \rho n$. Then $$\sum_{xy\in E(G)}f(x)f(y)\geq \frac{d}{2}\left(\sum_{x\in V(G)}f(x)\right)^2-n.$$
\end{lem}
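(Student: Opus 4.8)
The plan is to pass to the ``layer‑cake'' decomposition of $f$ into level sets and to apply the local density of $G$ to each level set, using the hypothesis $\sum_v f(v)\ge\rho n$ to guarantee that the relevant level sets are large enough for $(\rho,d)$‑density to bite. The first thing I would record is a convenient \emph{unconditional} consequence of $(\rho,d)$‑density: for every $X\subseteq V(G)$,
$$e(X)\ \ge\ \tfrac{d}{2}\,|X|^2-\tfrac{\rho n}{2}\,|X|.$$
If $|X|\ge\rho n$ this is the definition (we only subtract a nonnegative quantity); if $|X|<\rho n$ it is trivial, since then the right‑hand side is at most $\tfrac{|X|}{2}\bigl(d\rho n-\rho n\bigr)\le 0\le e(X)$ because $d\le 1$. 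This turns the threshold hypothesis into a uniform quadratic lower bound at the price of a linear error, which is exactly what should eventually produce the additive $-n$.

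Next, writing $f(x)=\int_0^1\mathbbm{1}[f(x)\ge t]\,dt$ and $V_t:=\{v\in V(G):f(v)\ge t\}$, one has $\int_0^1|V_t|\,dt=\sum_v f(v)=:\mu$, and expanding the product and summing over ordered pairs gives $\sum_{xy\in E}f(x)f(y)=\tfrac12\int_0^1\!\!\int_0^1 e^{*}(V_s,V_t)\,ds\,dt$, where $e^{*}(A,B)$ is the number of ordered pairs $(a,b)\in A\times B$ with $ab\in E$. Since the level sets are nested, $e^{*}(V_s,V_t)=e(V_s)+e(V_t)-e\bigl(V_{\min(s,t)}\setminus V_{\max(s,t)}\bigr)$, and integrating out one variable yields the exact identity
$$\sum_{xy\in E}f(x)f(y)\;=\;\int_0^1 e(V_t)\,dt\;-\;\int_{s\le t}e\bigl(V_s\setminus V_t\bigr)\,ds\,dt.$$
To the first (main) term I would apply the uniform estimate on each $V_t$; together with the Cauchy--Schwarz bound $\int_0^1|V_t|^2\,dt\ge\bigl(\int_0^1|V_t|\,dt\bigr)^2=\mu^2$ this already delivers a contribution of the form $\tfrac{d}{2}\mu^2-O(\rho n\mu)$. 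The second (error) term I would bound from above using the trivial estimate $e(V_s\setminus V_t)\le\binom{|V_s|-|V_t|}{2}$.

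The main obstacle will be controlling this error term. A priori $e(V_s\setminus V_t)$ can be as large as $\binom{|V_s\setminus V_t|}{2}$, and the crude integral $\int_{s\le t}\binom{|V_s|-|V_t|}{2}\,ds\,dt$ is only $O(n\mu)$ — far larger than the admissible additive error $O(n)$ — so one cannot integrate naively over all pairs of levels. The fix is to split the range of $t$ at the level $t^{\ast}$ where $|V_{t^{\ast}}|\approx\rho n$: for $t<t^{\ast}$ the level sets are large, so the density estimate is nearly lossless there, while the ``small'' levels $t>t^{\ast}$ are confined to a set of levels of total $\mu$‑mass $\int_{t>t^{\ast}}|V_t|\,dt\le\rho n$, so their total contribution to the error can be bounded by $O(n)$ rather than $O(n\mu)$; one must also check that the loss of $\binom{k}{2}$ against $\tfrac12 k^2$ in the trivial bounds is absorbed by its favorable linear correction. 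An alternative, possibly cleaner, route is to first reduce by a coordinatewise convexity/smoothing argument to the case where $f$ takes at most two distinct values — the extremal configurations are essentially $f=\mathbbm{1}_S+c\,\mathbbm{1}_{\bar S}$ — and then verify the inequality directly by combining $e(S)\ge\tfrac{d}{2}|S|^2$, $e(V(G))\ge\tfrac{d}{2}n^2$, and $e(\bar S)\le\binom{|\bar S|}{2}$, where again the additive $-n$ appears from the linear terms in these estimates.
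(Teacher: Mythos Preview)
The paper does \emph{not} contain a proof of this lemma; it is quoted verbatim from Reiher's paper~\cite{Re14} (as the attribution ``Reiher's inequality~\cite{Re14}, Lemma~2.1'' indicates) and used as a black box. So there is no ``paper's own proof'' to compare your attempt against.

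That said, a few remarks on your outline are in order. Your unconditional bound $e(X)\ge\tfrac{d}{2}|X|^2-\tfrac{\rho n}{2}|X|$ and the layer--cake identity
\[
\sum_{xy\in E}f(x)f(y)=\int_0^1 e(V_t)\,dt-\int_{s\le t}e(V_s\setminus V_t)\,ds\,dt
\]
are both correct. You have also correctly isolated the genuine difficulty: the naive bounds yield an additive error of order $\rho n\mu$ (equivalently $\rho n^2$), not $n$, and it is precisely squeezing the error down to $-n$ that requires the hypothesis $\sum_v f(v)\ge \rho n$ and some additional idea. However, neither of your two proposed fixes is carried far enough to be convincing. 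For the level--splitting at $t^\ast$, you assert that the small levels contribute only $O(n)$ but do not show it; the integral $\int_{s\le t}e(V_s\setminus V_t)\,ds\,dt$ still involves differences of \emph{large} level sets for $s<t<t^\ast$, and those are not obviously controlled. For the smoothing reduction, note that coordinatewise concavity of $\mathrm{LHS}-\mathrm{RHS}$ in each $f(w)$ lets you push $f(w)$ to an endpoint of its \emph{feasible} interval, but because of the constraint $\sum_v f(v)\ge\rho n$ that interval need not be $\{0,1\}$; you may get stuck on the boundary $\sum_v f(v)=\rho n$ with several non--extreme coordinates, so the ``at most two values'' conclusion needs more justification.

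If you want to complete a proof, Reiher's original argument in~\cite{Re14} is short and probabilistic (random rounding of $f$ to a set), and is worth consulting directly; your layer--cake route can also be made to work, but the bookkeeping needed to reach the sharp $-n$ is more delicate than your sketch suggests.
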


We need following lemmas from \cite{BSW}, which extend Reiher's lemma to the context of graphons.

\begin{lem}[\cite{BSW}, Lemma 2.8]\label{Reiher}
Let $W$ be a $d$-locally dense graphon. Then for every bounded measurable function $f:[0,1]\rightarrow [0,\infty)$, we have $$\int_{[0,1]^2}f(x)W(x,y)f(y)dxdy\geq d\left(\int_{[0,1]}f(x)dx\right)^2=d||f||_1^2.$$
\end{lem}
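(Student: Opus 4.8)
The plan is to deduce the inequality for an arbitrary $f$ from the special case of indicator functions — that is, from the very definition of $d$-local denseness applied to a well-chosen \emph{random} set — via a probabilistic averaging argument.

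First I would reduce to the case $0\le f\le 1$: both sides of the claimed inequality are homogeneous of degree $2$ in $f$, so replacing $f$ by $f/\|f\|_\infty$ changes nothing (and $f\equiv 0$ is trivial). Then, for a large integer $N$, partition $[0,1]$ into intervals $J_1,\dots,J_N$ of length $1/N$, set $f_i:=\frac1{|J_i|}\int_{J_i}f\in[0,1]$, and build a random measurable set $S\subseteq[0,1]$ by putting each $J_i$ into $S$ independently with probability $f_i$. The whole point of this construction is that $\Pr[x\in S]=f_i$ for $x\in J_i$, so the inclusion ``marginals'' of $S$ are precisely the step-function approximation $f_{\mathcal J}:=\sum_i f_i\mathbbm 1_{J_i}$ of $f$, and distinct intervals are handled independently.

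With this in hand, the argument proceeds as follows. Writing out $\Pr[x\in S,\,y\in S]$ — which equals $f_if_j$ when $x\in J_i$, $y\in J_j$ with $i\neq j$, and $f_i$ when $i=j$ — one gets
\[\mathbb E\!\left[\int_{S\times S}W\right]=\int_{[0,1]^2}f_{\mathcal J}(x)\,W(x,y)\,f_{\mathcal J}(y)\,dx\,dy+\varepsilon_N,\qquad \varepsilon_N=\sum_i(f_i-f_i^2)\!\int_{J_i\times J_i}\! W,\]
so that $|\varepsilon_N|\le\tfrac14\|W\|_\infty\sum_i|J_i|^2=O(1/N)$. On the other hand, since $W$ is $d$-locally dense we have $\int_{S\times S}W\ge d\,|S|^2$ for \emph{every} outcome of $S$; taking expectations and using $\operatorname{Var}(|S|)\ge 0$ together with $\mathbb E|S|=\sum_i f_i|J_i|=\int f$,
\[\mathbb E\!\left[\int_{S\times S}W\right]\ \ge\ d\,\mathbb E\big[|S|^2\big]\ \ge\ d\,\big(\mathbb E|S|\big)^2\ =\ d\Big(\int_{[0,1]}f\Big)^2\ =\ d\,\|f\|_1^2 .\]
Finally I would let $N\to\infty$: since $f_{\mathcal J}\to f$ in $L^1$ (martingale convergence of conditional expectations on refining partitions) while $\|f\|_\infty,\|W\|_\infty\le 1$, one has $\int f_{\mathcal J}Wf_{\mathcal J}\to\int fWf$; combined with $\varepsilon_N\to 0$ this gives $\int fWf=\lim_N\mathbb E[\int_{S\times S}W]\ge d\|f\|_1^2$, which is the assertion.

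In this approach there is no serious obstacle; the content lies in spotting the random-set coupling, which converts ``$W-dJ$ is nonnegative on indicator functions'' (the definition of $d$-local denseness) into ``$W-dJ$ is nonnegative on all $[0,1]$-valued functions'' by averaging, and is considerably lighter than emulating Reiher's original discrete argument. The only genuinely technical point is the convergence $\mathbb E[\int_{S\times S}W]\to\int fWf$ under refinement of the partition — i.e.\ bounding the diagonal error $\varepsilon_N$ and invoking $L^1$-convergence of conditional expectations — which is routine.
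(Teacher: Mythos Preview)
The paper does not actually prove this lemma; it is quoted from \cite{BSW} (their Lemma~2.8) and used as a black box, so there is no in-paper proof to compare your attempt against.

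That said, your argument is correct. The random-set coupling is a clean way to upgrade the defining inequality $\int_{S\times S}W\ge d|S|^2$ from indicator functions to all bounded nonnegative $f$: the identity $\mathbb{E}[\mathbbm{1}_S(x)\mathbbm{1}_S(y)]=f_{\mathcal J}(x)f_{\mathcal J}(y)$ off the diagonal blocks, together with the trivial $O(1/N)$ bound on the diagonal contribution, does exactly what is needed, and $\mathbb{E}[|S|^2]\ge(\mathbb{E}|S|)^2$ handles the right-hand side. One minor remark on the limiting step: to invoke martingale convergence literally you want the partitions to refine, so it is cleanest to take $N=2^k$ and dyadic intervals; alternatively, $L^1$-convergence of $f_{\mathcal J}$ to $f$ for the uniform $N$-partition follows directly from density of step functions in $L^1$, with no martingale machinery required. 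Either way the conclusion stands.
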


\begin{lem}[\cite{BSW}, Lemma 2.10]\label{extend-Reiher}
Let $W$ be a $d$-locally dense graphon. Let $\omega:[0,1]\rightarrow [0,\infty)$ be a bounded measurable function.
If $H$ is a $\kn$ graph, then $$\int_{[0,1]^{v(H)}}\prod_{v\in V(H)}\omega(x_v)\prod_{uv\in E(H)}W(x_u,x_v)\prod_{v\in V(H)}dx_v\geq ||\omega||_1^{v(H)}d^{e(H)}.$$
\end{lem}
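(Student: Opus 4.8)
The plan is to use the $\kn$ property of $H$ in its graphon form (Lemma~\ref{graphon form}): this reduces the task to manufacturing, from the given $W$ and $\omega$, a new graphon $W'$ that is still $d$-locally dense and whose homomorphism density $t(H,W')$ equals the weighted integral on the left-hand side up to the factor $\|\omega\|_1^{v(H)}$. The inequality then follows immediately by applying $t(H,W')\ge d^{e(H)}$.

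First I would dispose of the trivial case $\|\omega\|_1=0$ (then $\omega=0$ almost everywhere and both sides vanish), so assume $c:=\|\omega\|_1>0$. Set $G(t):=\frac1c\int_{[0,t]}\omega(x)dx$, a continuous non-decreasing function with $G(0)=0$ and $G(1)=1$, and let $\phi:=G^{-1}$ be its generalized inverse, $\phi(x):=\inf\{t\in[0,1]:G(t)\ge x\}$. The crucial property of $\phi$ is that the pushforward of Lebesgue measure along $\phi$ is the measure with density $\omega/c$; equivalently, $\int_{[0,1]}h(\phi(x))dx=\frac1c\int_{[0,1]}h(u)\omega(u)du$ for every bounded measurable $h$. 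Define the graphon $W'(x,y):=W(\phi(x),\phi(y))$.

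Two things then need to be verified. First, $t(H,W')=\frac{1}{c^{v(H)}}\int_{[0,1]^{v(H)}}\prod_{v\in V(H)}\omega(x_v)\prod_{uv\in E(H)}W(x_u,x_v)\prod_{v\in V(H)}dx_v$; this is a routine change of variables applied to the $v(H)$-fold product measure, using the pushforward identity above. Second, $W'$ is $d$-locally dense, which is the heart of the matter. Given a measurable $S'\subseteq[0,1]$, I would not push $S'$ forward through $\phi$ directly (its image need not even be measurable, since $\phi$ is far from measure-preserving); instead I would introduce the Radon--Nikodym density $\psi$ of the finite measure $A\mapsto|S'\cap\phi^{-1}(A)|$ with respect to Lebesgue measure. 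Then $0\le\psi\le\frac1c\|\omega\|_\infty$, one has $\int_{[0,1]}\psi=|S'|$, and $\int_{[0,1]}h(\phi(x))\mathbbm{1}_{S'}(x)dx=\int_{[0,1]}h(u)\psi(u)du$ for bounded $h$. Applying this identity twice (via Fubini, peeling off first the $x$- then the $y$-integral) gives $\int_{S'\times S'}W'(x,y)dxdy=\int_{[0,1]^2}\psi(u)W(u,v)\psi(v)dudv$, and Lemma~\ref{Reiher} bounds this below by $d\|\psi\|_1^2=d|S'|^2$, as required.

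With these in hand, Lemma~\ref{graphon form} applied to the $\kn$ graph $H$ and the $d$-locally dense graphon $W'$ yields $t(H,W')\ge d^{e(H)}$, and multiplying by $c^{v(H)}=\|\omega\|_1^{v(H)}$ completes the proof. I expect the main obstacle to be the local-density verification: one must track carefully that $\phi$ is not measure-preserving, so that restricting to $S'$ behaves like a fractional weighting rather than an honest subset — this is precisely why the conditional density $\psi$ (rather than an indicator) appears, and why the correct tool is the graphon form of Reiher's inequality (Lemma~\ref{Reiher}), which accommodates arbitrary bounded nonnegative weights, rather than the bare definition of $d$-local density.
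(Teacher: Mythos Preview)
The paper does not actually prove this lemma; it is quoted directly from \cite{BSW} (their Lemma~2.10) and used as a black box. So there is no in-paper argument to compare against.

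That said, your proposal is correct and complete. The reparametrisation $W'(x,y)=W(\phi(x),\phi(y))$ via the generalised inverse of $G(t)=\frac{1}{\|\omega\|_1}\int_0^t\omega$ does exactly what you claim: the pushforward of Lebesgue measure under $\phi$ has density $\omega/\|\omega\|_1$, which gives $t(H,W')=\|\omega\|_1^{-v(H)}\int\prod_v\omega(x_v)\prod_{uv}W(x_u,x_v)\,dx$ by a coordinatewise change of variables. For local density, your passage to the Radon--Nikodym weight $\psi$ of the measure $A\mapsto|S'\cap\phi^{-1}(A)|$ is the right move (and your remark that one cannot simply push $S'$ through $\phi$ is well taken); the bound $0\le\psi\le\omega/\|\omega\|_1\le\|\omega\|_\infty/\|\omega\|_1$ follows from $|S'\cap\phi^{-1}(A)|\le|\phi^{-1}(A)|=\int_A\omega/\|\omega\|_1$, and then Lemma~\ref{Reiher} applied to $f=\psi$ gives $\int_{S'\times S'}W'\ge d\|\psi\|_1^2=d|S'|^2$. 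Applying Lemma~\ref{graphon form} to $W'$ finishes it. No gaps.
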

\medskip

\section{The proofs}
We devote this section to the proofs of Theorems~\ref{even sub} and \ref{regular sub}.
First we define several key notions that will be frequently utilized throughout this section.
Let $W$ be a graphon and let $s\geq 1$ be an integer.
We define the functions $W_{P_s}:[0,1]\rightarrow [0,1]$ as
\begin{equation*}
\begin{aligned}
W_{P_s}(x):&=\int_{[0,1]^s}W(x,x_1)W(x_1,x_2)...W(x_{s-1},x_s)\prod_{i=1}^{s}dx_i.
\end{aligned}
\end{equation*}
 and $W_s:[0,1]^2\rightarrow [0,1]$ as
\begin{equation*}
\begin{aligned}
W_s(x,y):&=\int_{[0,1]^{s-1}}W(x,x_1)W(x_1,x_2)...W(x_{s-2},x_{s-1})W(x_{s-1},y)\prod_{i=1}^{s-1}dx_i.\\
\end{aligned}
\end{equation*}
It is evident that $W_s$ is a graphon. We remark that it holds
\begin{equation}\label{c0}
\int_{[0,1]}W_s(x,y)dy=W_{P_s}(x).
\end{equation}
The following result is useful when computing the homomorphism density of subdivisions.

\begin{lem}\label{lem:transform}
Let $W$ be a graphon and $H^{(s)}$ be the $s$-subdivision of a graph $H$. Then it holds
\begin{equation*}
t(H^{(s)},W)=t(H,W_{s+1}).
\end{equation*}
\end{lem}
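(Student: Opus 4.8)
The plan is to unpack both sides as Lebesgue integrals over cubes indexed by the relevant vertex sets and match them up. First I would fix notation for the vertices of $H^{(s)}$: they consist of the original vertices $V(H)$ together with, for each edge $e=uv\in E(H)$, a path of $s$ internal vertices $z^e_1,\dots,z^e_s$ forming the path $u\, z^e_1\, z^e_2\cdots z^e_s\, v$ of length $s+1$. By the definition of $t(H^{(s)},W)$, we integrate $\prod_{ij\in E(H^{(s)})}W(x_i,x_j)$ over $[0,1]^{V(H^{(s)})}$ against Lebesgue measure in every coordinate. The key observation is that the $s$ internal variables $x_{z^e_1},\dots,x_{z^e_s}$ of a given edge $e=uv$ appear only in the factors $W(x_u,x_{z^e_1})W(x_{z^e_1},x_{z^e_2})\cdots W(x_{z^e_{s-1}},x_{z^e_s})W(x_{z^e_s},x_v)$ and in no other edge of $H^{(s)}$, since the subdivision paths are internally disjoint.

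Next I would apply Fubini's theorem to carry out the integration over the internal variables $x_{z^e_1},\dots,x_{z^e_s}$ first, separately for each edge $e\in E(H)$. By the very definition of $W_{s+1}$ given above equation~\eqref{c0} (with $s$ replaced by $s+1$, so that the path $W(x,x_1)\cdots W(x_s,y)$ has exactly $s$ intermediate integration variables), this inner integral equals $W_{s+1}(x_u,x_v)$. Performing this reduction simultaneously over all $e=uv\in E(H)$ leaves exactly the integral $\int_{[0,1]^{V(H)}}\prod_{uv\in E(H)}W_{s+1}(x_u,x_v)\prod_{v\in V(H)}dx_v$, which is precisely $t(H,W_{s+1})$. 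A minor point worth checking is that a vertex $v\in V(H)$ of degree zero in $H$ contributes a trivial factor $\int_{[0,1]}dx_v=1$ on both sides, and isolated edges are handled correctly by the $s+1$ bookkeeping; one should also note $W_{s+1}$ is indeed a $[0,1]$-valued graphon so $t(H,W_{s+1})$ is well-defined, which is already remarked in the text.

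The only real obstacle is a careful verification that $W_{s+1}$ as defined (an $(s+1)$-fold "path convolution" with $s$ internal integration variables) matches the $s$ internal path vertices of $H^{(s)}$; this is purely an indexing check and involves no inequality or analytic subtlety, so the proof is essentially a bookkeeping argument built on Fubini's theorem and the disjointness of the subdivision paths.
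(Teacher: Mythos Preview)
Your proof is correct and follows essentially the same route as the paper's proof: expand $t(H^{(s)},W)$ as an integral over all vertices of $H^{(s)}$, use Fubini and the internal disjointness of the subdivision paths to integrate out the $s$ internal variables along each edge $e=uv$, and recognise that inner integral as $W_{s+1}(x_u,x_v)$ by definition. The paper's argument is simply a terser version of yours (omitting the remarks on isolated vertices and the graphon check), so there is no substantive difference.
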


\begin{proof}
It follows from the definition of $H^{(s)}$ and $W_{s}(x,y)$ that
\begin{equation*}
t(H^{(s)},W)=\int_{[0,1]^{v(H)}}\prod_{ij\in E(H)}\left(\int_{[0,1]^s}W(x_i,y_1)\prod_{k=1}^{s-1}W(y_k,y_{k+1})W(y_s,x_j)\prod_{m=1}^sdy_m\right)\prod_{v\in V(H)}dx_v,
\end{equation*}
which equals $\int_{[0,1]^{v(H)}}\prod_{ij\in E(H)}W_{s+1}(x_i,x_j)\prod_{v\in V(H)}dx_v=t(H,W_{s+1})$, as desired.
\end{proof}

We also need a classical result in real analysis, which can be found, for example, in~\cite{Folland}.
\begin{lem}\label{closed set}
Let $E$ be a measurable set. For every $\varepsilon>0$, there exists a closed set $F\subseteq E$, such that $|E-F|<\varepsilon$.
\end{lem}

We are now ready to present the proof of Theorem~\ref{even sub}.

\begin{proof}[\bf Proof of Theorem~\ref{even sub}]
We first prove the item (a) that if $H$ is KNRS, then the $(2k-1)$-subdivision $H^{(2k-1)}$ of $H$ is Sidorenko for any integer $k\geq 1$.
Note that by Lemma~\ref{Sido-regular}, it suffices to consider any $d$-regular graphon $W$ and prove $t(H^{(2k-1)},W)\geq d^{e\big(H^{(2k-1)}\big)}$.

Recall the definition of $W_k$. We first show that $\int_{[0,1]}W_k(x,y)dy=d^k$ for every $x\in [0,1]$.
We prove this by induction on $k$. The base case $k=1$ holds trivially because $W_1=W$ is $d$-regular.
Now assume this holds for $k-1$. For every $x\in [0,1]$, it holds that
\begin{equation*}
\begin{aligned}
\int_{[0,1]}W_k(x,y)dy=&\int_{[0,1]^{k-1}}W(x,x_1)W(x_1,x_2)...W(x_{k-2},x_{k-1})\left(\int_{[0,1]}W(x_{k-1},y)dy\right)\prod_{i=1}^{k-1}dx_i\\
=&d\int_{[0,1]^{k-1}}W(x,x_1)W(x_1,x_2)...W(x_{k-2},x_{k-1})\prod_{i=1}^{k-1}dx_i\\
=&d\int_{[0,1]}W_{k-1}(x,x_{k-1})dx_{k-1}=d^k,
\end{aligned}
\end{equation*}
where the second equality holds as $W$ is $d$-regular and the last equality follows by induction.

Next we claim that $W_{2k}(x,y)$ is $d^{2k}$-locally dense.
For any measurable $A\subseteq [0,1]$, we have
\begin{equation*}
\int_{A\times A}W_{2k}(x,y)dxdy=\int_{[0,1]}\left(\int_{A\times A} W_k(x,z)W_k(z,y)dxdy\right)dz=\int_{[0,1]}\left(\int_AW_k(x,z)dx\right)^2dz.
\end{equation*}
Then using Cauchy-Schwarz inequality, it follows that
\begin{equation*}
\int_{[0,1]}\left(\int_AW_{k}(x,z)dx\right)^2dz \geq \left(\int_{[0,1]}\int_AW_{k}(x,z)dxdz\right)^2=\left(\int_{A}\left(\int_{[0,1]}W_{k}(x,z)dz\right)dx\right)^2.
\end{equation*}
Since $\int_{[0,1]}W_{k}(x,z)dz=d^k$ for any $x\in [0,1]$,
we get that $\int_{A\times A}W_{2k}(x,y)dxdy\geq \left(\int_{A}d^{k}dx\right)^2=d^{2k}|A|^2$ holds for any measurable $A\subseteq [0,1]$.
Thus, $W_{2k}(x,y)$ is $d^{2k}$-locally dense.

Using Lemma~\ref{lem:transform} and the facts that $H$ is KNRS and $W_{2k}$ is $d^{2k}$-locally dense,
we can deduce that $t(H^{(2k-1)},W)=t(H,W_{2k})\geq (d^{2k})^{e(H)}=d^{2k\cdot e(H)}=d^{e\big(H^{(2k-1)}\big)}$ for any $d$-regular graphon $W$.
By Lemma~\ref{Sido-regular}, this shows that $H^{(2k-1)}$ is Sidorenko, proving the item (a).

\bigskip

Next we prove the item (b) that if $H$ is KNRS, then the $2k$-subdivision $H^{(2k)}$ of $H$ is weakly-KNRS for any integer $k\geq 1$.
Consider any $d\in (0,1)$ and any $d$-locally dense graphon $W$.
We first show that there is a large locally dense measurable closed subset with respect to $W_{2k+1}$.

\medskip

{\bf \noindent Claim $(\star)$.} For every $\varepsilon>0$, there exists a measurable closed subset $F_\varepsilon \subseteq [0,1]$ with $|F_\varepsilon|\geq \frac{1}{2}+\phi(\varepsilon)$ such that the restricting graphon $W_{2k+1}[F_\varepsilon]$ is $\frac{d^{2k+1}}{2^{2k}}$-locally dense, where $\phi(\cdot)$ is a function such that $\phi(\varepsilon)\rightarrow 0$ when $\varepsilon\rightarrow 0$.

\begin{proof}[Proof of Claim $(\star)$.]
Let $D=\{x\in [0,1]:W_{P_k}(x)\geq (\frac{d}{2})^k\}$ and $\bar{D}:=[0,1] \setminus D$ be the complement of $D$ in $[0,1]$. By Lemma~\ref{closed set}, for every $\varepsilon>0$, there exist closed sets $F_\varepsilon$ and $E_\varepsilon$ such that $F_\varepsilon\subseteq D$ and $E_\varepsilon\subseteq \bar{D}$ with $|D-F_\varepsilon|<\varepsilon$ and $|\bar{D}-E_\varepsilon|<\varepsilon$. We will proceed to show that the closed subset $F_\varepsilon$ is the desired subset.
First we demonstrate that $|F_\varepsilon|\geq \frac{1}{2}+\phi(\varepsilon)$ for some function $\phi(\cdot)$ such that $\phi(\varepsilon)\rightarrow 0$ when $\varepsilon\rightarrow 0$.
Note that $|\bar{D}|=1-|D|$.
By the definition, we have that
\begin{equation}\label{c1}
\int_{\bar{D}}W_{P_k}(x)dx<\left(\frac{d}{2}\right)^k\cdot (1-|D|).
\end{equation}
On the other hand, it holds that
\begin{equation}\label{c2}
\begin{aligned}
\int_{\bar{D}}W_{P_k}(x)dx &\geq \int_{\bar{D}^{k+1}}\prod_{ij\in E(P_k)}W(x_i,x_j)\prod_{v\in V(P_k)}dx_v\\
&\geq \int_{E_\varepsilon^{k+1}}\prod_{ij\in E(P_k)}W(x_i,x_j)\prod_{v\in V(P_k)}dx_v\\
&=t(P_k,W[E_\varepsilon])\cdot |E_\varepsilon|^{k+1}.
\end{aligned}
\end{equation}
where the last equality follows from \eqref{W[A]'}.
Using the well-known fact that every path $P_k$ is Sidorenko and by Lemma~\ref{sub locally dense} that the restricting graphon $W[E_\varepsilon]$ is $d$-locally dense, we have
\begin{equation}\label{c3}
t(P_k,W[E_\varepsilon])\geq t(K_2,W[E_\varepsilon])^k\geq d^k.
\end{equation}
Combining with inequalities \eqref{c1}, \eqref{c2}, \eqref{c3} and $|\bar{A}|-|E_\varepsilon|<\varepsilon$,
we deduce that $$\left(\frac{d}{2}\right)^k(1-|A|)> d^k\cdot |E_\varepsilon|^{k+1}\geq d^k\cdot (1-|D|-\varepsilon)^{k+1},$$ which implies that $|D|\geq \frac{1}{2}+\varphi(\varepsilon),$ where $\varphi(\cdot)$ is some function such that $\varphi(\varepsilon)\rightarrow 0$ when $\varepsilon\rightarrow 0$. Note that by Lemma~\ref{closed set}, we have that $|D|-|F_\varepsilon|<\varepsilon$, which implies that $|F_\varepsilon|>|D|-\varepsilon\geq\frac{1}{2}+\varphi(\varepsilon)-\varepsilon.$ Let $\phi(\varepsilon)$ be $\varphi(\varepsilon)-\varepsilon$. Then we have done.

It remains to show that $W_{2k+1}[F_\varepsilon]$ is $\frac{d^{2k+1}}{2^{2k}}$-locally dense.
Let $F$ be defined by setting the measurable closed set $A=F_\varepsilon$ in \eqref{f(z)}.
Consider any measurable $B'\subseteq [0,1]$ and define $B$ by \eqref{equ:B}.
Then $B\subseteq F_\varepsilon$ and $|B'|=\frac{|B|}{|F_\varepsilon|}.$
By Lemma~\ref{F(B|A|)},
\begin{equation}\label{d0}
\int_{B'\times B'}W_{2k+1}[F_\varepsilon](x,y)dxdy=\frac{1}{|F_\varepsilon|^2}\int_{B\times B}W_{2k+1}(x,y)dxdy.
\end{equation}
Since $W$ is $d$-locally dense, by setting $f(z)=\int_{B}W_k(x,z)dx$, it follows from Lemma~\ref{Reiher} that
\begin{equation}\label{d1}
\begin{aligned}
\int_{B\times B}W_{2k+1}(x,y)dxdy=&\int_{[0,1] \times [0,1]}W(z,w)\left(\int_{B}W_k(x,z)dx\right)\left(\int_{B}W_k(y,w)dy\right)dzdw\\
=&\int_{[0,1] \times [0,1]}W(z,w)f(z)f(w)dzdw\geq d||f||_1^2.
\end{aligned}
\end{equation}
Moreover, we can deduce that
\begin{equation}\label{d2}
||f||_1^2=\left(\int_{B}\left(\int_{[0,1]}W_k(x,z)dz\right)dx \right)^2=\left(\int_{B}W_{P_k}(x)dx\right)^2\geq |B|^2\cdot (d/2)^{2k},
\end{equation}
where the last inequality follows from the fact that $B\subseteq F_\varepsilon \subseteq A$ and the definition of $A$.
Recall that $|B'|=\frac{|B|}{|F_\varepsilon|}.$
Combining (\ref{d0}) with (\ref{d1}) and (\ref{d2}), we obtain that $$\int_{B'\times B'}W_{2k+1}[F_\varepsilon](x,y)dxdy\geq \frac{d^{2k+1}}{2^{2k}}\frac{|B|^2}{|F_\varepsilon|^2}=\frac{d^{2k+1}}{2^{2k}}|B'|^2$$ holds for any measurable subset $B'\subseteq [0,1]$.
This shows that $W_{2k+1}[F_\varepsilon]$ is indeed $\frac{d^{2k+1}}{2^{2k}}$-locally dense, completing the proof of Claim $(\star)$.
\end{proof}

Since $H$ is KNRS and $W_{2k+1}[F_\varepsilon]$ is $\frac{d^{2k+1}}{2^{2k}}$-locally dense, by Lemma~\ref{lem:transform} and \eqref{W[A]} we have
\begin{equation*}
\begin{aligned}
t(H^{(2k)},W)&=t(H,W_{2k+1})\geq |F_\varepsilon|^{v(H)} t(H,W_{2k+1}[F_\varepsilon])\geq \left(1/2+\phi(\varepsilon)\right)^{v(H)}\cdot \left(\frac{d^{(2k+1)}}{2^{2k}}\right)^{e(H)}.
\end{aligned}
\end{equation*}
holds for every $d$-locally dense graphon $W$ and every $\varepsilon>0$. By letting $\varepsilon\rightarrow 0$, we have that $$t(H^{(2k)},W)\geq c_H\cdot d^{(2k+1)e(H)}=c_H\cdot d^{e\big(H^{(2k)}\big)},$$ where $c_H=(\frac{1}{2})^{v(H)+2ke(H)}$ is an absolute constant.
By Lemma~\ref{weakly-knrs-limits}, $H^{(2k)}$ is weakly-KNRS, finishing the proof of Theorem~\ref{even sub}.
\end{proof}

In the rest of this section, we prove Theorem~\ref{regular sub}.

\begin{proof}[\bf Proof of Theorem~\ref{regular sub}]
Consider a graph $H$ which is KNRS and regular, say every vertex $v\in V(H)$ has degree $\Delta\geq 1$.
We want to show that the $2k$-subdivision $H^{(2k)}$ of $H$ is KNRS for any integer $k\geq 1$.
Let $d\in (0,1)$ and let $W$ be any $d$-locally dense graphon.
By Lemma~\ref{graphon form}, it suffices for us to prove that $t(H^{(2k)},W)\geq d^{(2k+1)e(H)},$ where $e(H^{(2k)})=(2k+1)e(H)$.

We proceed to prove a series of claims.
The first claim asserts that the set consisting of $x\in [0,1]$ with $W_{P_k}(x)=0$ must be a zero-measure set.

\begin{claim}\label{zero-measure}
For integers $k\geq 1$, let $B_k=\{x\in [0,1]:~ W_{P_k}(x)=0\}$.
Then it holds that $|B_k|=0$.
\end{claim}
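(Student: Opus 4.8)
The plan is to argue by contradiction and induction on $k$. Suppose $|B_k|>0$ for some $k\geq 1$, and take $k$ minimal with this property. For the base case $k=1$, note $W_{P_1}(x)=\int_{[0,1]}W(x,y)\,dy=\mathrm{deg}_W(x)$. If $|B_1|>0$, consider $A=B_1$ itself: since $W$ is $d$-locally dense, $\int_{A\times A}W(x,y)\,dxdy\geq d|A|^2>0$; but $W(x,y)\geq 0$ forces $\int_A\int_{[0,1]}W(x,y)\,dy\,dx\geq\int_{A\times A}W>0$, so $W_{P_1}$ cannot vanish a.e.\ on $A=B_1$, a contradiction. Hence $|B_1|=0$.

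For the inductive step, suppose $k\geq 2$ and $|B_{k-1}|=0$ but $|B_k|>0$. The key identity is
\begin{equation}\label{eq:claim-chain}
W_{P_k}(x)=\int_{[0,1]}W(x,y)\,W_{P_{k-1}}(y)\,dy,
\end{equation}
which follows directly from the definitions of $W_{P_k}$ and $W_{P_{k-1}}$ by integrating out the last vertex of the path. Now integrate \eqref{eq:claim-chain} over $x\in B_k$: since $W_{P_k}\equiv 0$ on $B_k$ and the integrand is nonnegative, we get $\int_{B_k}\int_{[0,1]}W(x,y)W_{P_{k-1}}(y)\,dy\,dx=0$, so $W(x,y)W_{P_{k-1}}(y)=0$ for a.e.\ $(x,y)\in B_k\times[0,1]$. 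By the inductive hypothesis $W_{P_{k-1}}>0$ a.e., so this forces $W(x,y)=0$ for a.e.\ $(x,y)\in B_k\times[0,1]$, and in particular for a.e.\ $(x,y)\in B_k\times B_k$. But then $\int_{B_k\times B_k}W(x,y)\,dxdy=0$, contradicting $\int_{B_k\times B_k}W(x,y)\,dxdy\geq d|B_k|^2>0$ from local density. This completes the induction and the proof.

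I expect the only mildly delicate point to be the bookkeeping around ``almost everywhere'': one must be careful that ``$W_{P_{k-1}}>0$ a.e.'' combined with ``$W(x,y)W_{P_{k-1}}(y)=0$ a.e.\ on $B_k\times[0,1]$'' really yields ``$W(x,y)=0$ a.e.\ on $B_k\times[0,1]$'', which is a routine application of Fubini (for a.e.\ fixed $x\in B_k$, the section $y\mapsto W(x,y)W_{P_{k-1}}(y)$ vanishes a.e., and since $W_{P_{k-1}}(y)>0$ a.e., $W(x,y)=0$ a.e.\ in $y$). Everything else is a direct consequence of nonnegativity of $W$ and the local density inequality applied to the set $B_k$ itself. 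There is no real obstacle here; the claim is a soft positivity statement whose role is to make later divisions by $W_{P_k}(x)$ legitimate.
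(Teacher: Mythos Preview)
Your proof is correct, but it takes a different route from the paper's. The paper gives a one-shot argument valid for every $k$: assuming $|B_k|>0$, it restricts all $k+1$ variables of the path to $B_k$, uses \eqref{W[A]'} to rewrite the resulting integral as $|B_k|^{k+1}\,t(P_k,W[B_k])$, and then invokes two ingredients developed earlier in the paper---that $P_k$ is Sidorenko and that $W[B_k]$ is again $d$-locally dense (Lemma~\ref{sub locally dense})---to bound this below by $|B_k|^{k+1}d^k>0$, contradicting $\int_{B_k}W_{P_k}=0$.

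Your inductive argument trades these tools for the elementary recursion $W_{P_k}(x)=\int W(x,y)W_{P_{k-1}}(y)\,dy$ together with the raw local-density inequality applied directly to $W$ on $B_k\times B_k$. The gain is that you never need the restricting-graphon formalism or the Sidorenko property of paths; the cost is an induction and the small Fubini bookkeeping you flagged. Both approaches are short and valid; the paper's is perhaps more in keeping with the machinery it has already set up, while yours is self-contained.
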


\begin{proof}
Suppose for a contradiction that $|B_k|>0$. Then by Lemma~\ref{closed set}, for $\varepsilon>0$, there exists a close set $E\subseteq B_k$ with $|E|>0$ and $|B_k-E|<\varepsilon$. We can estimate as follows:
\begin{equation*}
\begin{aligned}
\int_{B_k}W_{P_k}(x)dx&\geq \int_{{B_k}^{k+1}}\prod_{ij\in E(P_k)}W(x_i,x_j)\prod_{v\in V(P_k)}dx_v\\
&\geq \int_{{E}^{k+1}}\prod_{ij\in E(P_k)}W(x_i,x_j)\prod_{v\in V(P_k)}dx_v\\
&=|E|^{k+1}\cdot t(P_k,W[E])\geq |E|^{k+1}\cdot t(K_2,W[E])^k\geq |E|^{k+1}\cdot d^k>0,
\end{aligned}
\end{equation*}
where the first equality follows from (\ref{W[A]'}), the second inequality holds because $P_k$ is Sidorenko,
and the last inequality can be deduced by Lemma~\ref{sub locally dense} that $W[E]$ is $d$-locally dense.
On the other hand, by the definition of $B_k$ it holds that $\int_{B_k}W_{P_k}(x)dx=0$, a contradiction.
\end{proof}

We define the function $W_{2k+1}'(x,y): [0,1]^2 \to [0,1]$ as follows:
if $W_{P_k}(x)W_{P_k}(y)=0$ for $x,y\in [0,1]$, define $W_{2k+1}'(x,y)=0$;
otherwise $W_{P_k}(x)W_{P_k}(y)>0$, define $$W_{2k+1}'(x,y):=\frac{W_{2k+1}(x,y)}{W_{P_k}(x)W_{P_k}(y)}.$$

\begin{claim}
$W_{2k+1}'$ is a graphon.
\end{claim}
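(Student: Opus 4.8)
To show that $W'_{2k+1}$ is a graphon, I must check three things: it is measurable, it is symmetric, and it takes values in $[0,1]$. Symmetry is immediate from the construction, since both $W_{2k+1}$ and the product $W_{P_k}(x)W_{P_k}(y)$ are symmetric in $x$ and $y$, and the degenerate case $W_{P_k}(x)W_{P_k}(y)=0$ is itself a symmetric condition. Measurability is also routine: $W_{P_k}$ and $W_{2k+1}$ are measurable (each is obtained from the measurable kernel $W$ by integrating out variables, so by the Fubini--Tonelli theorem they are measurable functions of the remaining variables), the set $\{(x,y): W_{P_k}(x)W_{P_k}(y)=0\}$ is measurable, and on its complement $W'_{2k+1}$ is a quotient of measurable functions with a strictly positive denominator, hence measurable; thus $W'_{2k+1}$ is measurable on all of $[0,1]^2$.

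The only substantive point is that $0 \le W'_{2k+1}(x,y) \le 1$. Nonnegativity is clear, since $W_{2k+1}\ge 0$ and the denominator is positive whenever we divide. For the upper bound it suffices to prove the pointwise inequality
\begin{equation*}
W_{2k+1}(x,y)\le W_{P_k}(x)\,W_{P_k}(y)\qquad\text{for all }x,y\in[0,1].
\end{equation*}
To this end I plan to use the composition identity for the kernels $W_s$: for integers $a,b\ge 1$ one has $W_{a+b}(x,y)=\int_{[0,1]}W_a(x,z)W_b(z,y)\,dz$, which follows directly from the definition of $W_s$ by singling out the $a$-th internal variable. Applying this with $a=k$, $b=k+1$ and then expanding $W_{k+1}(z,y)=\int_{[0,1]}W(z,w)W_k(w,y)\,dw$ gives
\begin{equation*}
W_{2k+1}(x,y)=\int_{[0,1]^2}W_k(x,z)\,W(z,w)\,W_k(w,y)\,dz\,dw.
\end{equation*}
Since $W$ is a graphon we have $0\le W(z,w)\le 1$, so the integrand is at most $W_k(x,z)W_k(w,y)$, and the double integral factorizes, yielding $W_{2k+1}(x,y)\le\big(\int_{[0,1]}W_k(x,z)\,dz\big)\big(\int_{[0,1]}W_k(w,y)\,dw\big)=W_{P_k}(x)W_{P_k}(y)$ by \eqref{c0}. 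Hence $W'_{2k+1}(x,y)\le 1$ whenever $W_{P_k}(x)W_{P_k}(y)>0$, and $W'_{2k+1}(x,y)=0$ otherwise, so $W'_{2k+1}$ maps into $[0,1]$.

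I do not anticipate a genuine obstacle here; the proof is essentially bookkeeping. The one step requiring a little care is the composition identity for $W_s$ and the resulting integral representation of $W_{2k+1}$, after which the bound $W\le 1$ does all the work. (I will also note that Claim~\ref{zero-measure} guarantees $W_{P_k}>0$ almost everywhere, so the degenerate case is confined to a null set, though this is not needed for $W'_{2k+1}$ to qualify as a graphon.)
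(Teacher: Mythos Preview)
Your proposal is correct and follows essentially the same approach as the paper: both arguments reduce to the pointwise bound $W_{2k+1}(x,y)\le W_{P_k}(x)W_{P_k}(y)$, obtained by writing $W_{2k+1}(x,y)=\int_{[0,1]^2}W_k(x,z)W(z,w)W_k(w,y)\,dz\,dw$ and using $W\le 1$ together with \eqref{c0}. The paper simply compresses this into a single chain of (in)equalities and calls measurability and symmetry ``obvious,'' whereas you spell out the composition identity and the measurability check in more detail; your final remark about Claim~\ref{zero-measure} is extraneous but harmless.
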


\begin{proof}
It is obvious that $W_{2k+1}'$ is measurable, $W_{2k+1}'(x,y)=W_{2k+1}'(y,x)$, and $W_{2k+1}'(x,y)\geq 0$ for all $x,y\in [0,1]$.
Thus, it remains to prove that $W_{2k+1}'(x,y)\leq 1$.
Note that for $x,y\in [0,1]$ with $W_{P_k}(x)W_{P_k}(y)=0$, we have $W_{2k+1}'(x,y)=0$.
Otherwise, it holds that
\begin{equation*}
\begin{aligned}
 W_{2k+1}'(x,y)&=\frac{W_{2k+1}(x,y)}{W_{P_k}(x)W_{P_k}(y)}=\frac{\int_{[0,1]^2}W_{k}(x,z)W(z,w)W_{k}(y,w)dzdw}{W_{P_k}(x)W_{P_k}(y)}\\
 &\leq \frac{\int_{[0,1]^2}W_{k}(x,z)W_{k}(y,w)dzdw}{W_{P_k}(x)W_{P_k}(y)}=1,
\end{aligned}
\end{equation*}
where the last inequality follows from (\ref{c0}).
So indeed $W_{2k+1}'$ is a graphon.
\end{proof}

Next we define an auxiliary function $U_{k}(x,y):[0,1]^2 \rightarrow [0,\infty)$ as follows:
if $W_{P_k}(x)=0$ for $x\in [0,1]$, then for any $y\in [0,1]$ we define $U_{k}(x,y)=0$;
otherwise we define
$$U_{k}(x,y):=\frac{W_k(x,y)}{W_{P_k}(x)}.$$
Let $B_k$ be the subset defined in Claim~\ref{zero-measure}.
For any $A\subseteq [0,1]$, let $\hat{A}_k$ be the subset $A \setminus B_k$.
By Claim~\ref{zero-measure}, $B_k$ is a zero-measure set, which yields that $|\hat{A}_k|=|A|$ and thus
\begin{equation}\label{e2}
\begin{aligned}
 \int_{A\times A}W_{2k+1}'(x,y)dx=\int_{\hat{A}_k\times \hat{A}_k}W_{2k+1}'(x,y)dx.
\end{aligned}
\end{equation}
For any $x\in \hat{A}_k$, it holds that $W_{P_k}(x)>0$ and thus $\int_{[0,1]}\frac{W_k(x,z)}{W_{P_k}(x)}dz=1$.
Hence, we have
\begin{equation}\label{e3}
\begin{aligned}
\int_{[0,1]}\left(\int_{\hat{A}_k}U_k(x,y)dx\right)dz &=\int_{[0,1]}\left(\int_{\hat{A}_k}\frac{W_k(x,z)}{W_{P_k}(x)}dx\right)dz\\
&=\int_{\hat{A}_k}\left(\int_{[0,1]}\frac{W_k(x,z)}{W_{P_k}(x)}dz\right)dx=\int_{\hat{A}_k}1dx=|\hat{A}_k|=|A|.
\end{aligned}
\end{equation}

\begin{claim}\label{claim4}
The graphon $W_{2k+1}'$ is $d$-locally dense.
\end{claim}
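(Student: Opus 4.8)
The plan is to show that for any measurable $A\subseteq[0,1]$ we have $\int_{A\times A}W_{2k+1}'(x,y)\,dx\,dy\geq d|A|^2$. The key idea is to express the integral over $A\times A$ of $W_{2k+1}'$ in terms of the original graphon $W$ weighted by the auxiliary kernel $U_k$, and then apply Reiher's inequality (Lemma~\ref{Reiher}) to a suitable weight function, using the normalization $\int_{[0,1]}U_k(x,z)\,dz=1$ for $x\notin B_k$ that was recorded just before the claim.

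First I would reduce to $\hat{A}_k=A\setminus B_k$ via \eqref{e2}, so that every $x$ in the domain of integration satisfies $W_{P_k}(x)>0$. Then I would unfold the definitions: for $x,y\in\hat{A}_k$,
\[
W_{2k+1}'(x,y)=\frac{W_{2k+1}(x,y)}{W_{P_k}(x)W_{P_k}(y)}=\int_{[0,1]^2}\frac{W_k(x,z)}{W_{P_k}(x)}\,W(z,w)\,\frac{W_k(y,w)}{W_{P_k}(y)}\,dz\,dw=\int_{[0,1]^2}U_k(x,z)W(z,w)U_k(y,w)\,dz\,dw.
\]
Integrating over $x,y\in\hat{A}_k$ and swapping the order of integration, with $g(z):=\int_{\hat{A}_k}U_k(x,z)\,dx$, gives
\[
\int_{A\times A}W_{2k+1}'(x,y)\,dx\,dy=\int_{[0,1]^2}g(z)\,W(z,w)\,g(w)\,dz\,dw.
\]
Now $g$ is bounded and nonnegative (boundedness following since $U_k\le W_k/W_{P_k}$ and one can argue $g\le$ something finite, or simply note $g$ is integrable which is all that is truly needed after a standard truncation argument; in any case the hypotheses of Lemma~\ref{Reiher} apply), so Reiher's inequality yields $\int W(z,w)g(z)g(w)\geq d\|g\|_1^2$. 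Finally \eqref{e3} computes $\|g\|_1=\int_{[0,1]}g(z)\,dz=|A|$, so the whole expression is at least $d|A|^2$, which is exactly $d$-local density of $W_{2k+1}'$.

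The main obstacle I anticipate is purely technical rather than conceptual: verifying that $g$ is a bounded measurable function so that Lemma~\ref{Reiher} applies verbatim. The function $U_k(x,y)=W_k(x,y)/W_{P_k}(x)$ can be large when $W_{P_k}(x)$ is tiny, so $g(z)=\int_{\hat A_k}U_k(x,z)\,dx$ need not be obviously bounded. One clean way around this is to observe that $g(z)=\int_{\hat A_k} W_k(x,z)/W_{P_k}(x)\,dx \le \int_{\hat A_k} 1/W_{P_k}(x)\,dx$ is not a priori finite, so instead I would approximate: replace $\hat A_k$ by $\hat A_k\cap\{x:W_{P_k}(x)\ge 1/m\}$, apply Lemma~\ref{Reiher} on this truncated set where $g$ is genuinely bounded, and let $m\to\infty$ using monotone convergence together with \eqref{e3} to recover $\|g\|_1=|A|$ in the limit. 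Everything else — the Fubini swaps, the unfolding of $W_{2k+1}$ as a convolution $\int W_k(x,z)W(z,w)W_k(y,w)$, and the final arithmetic — is routine.
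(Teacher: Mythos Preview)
Your proposal is correct and follows essentially the same route as the paper's proof: reduce to $\hat{A}_k$ via \eqref{e2}, rewrite the integral as $\int_{[0,1]^2} W(z,w)f(z)f(w)\,dz\,dw$ with $f(z)=\int_{\hat A_k}U_k(x,z)\,dx$, apply Lemma~\ref{Reiher}, and use \eqref{e3} to get $\|f\|_1=|A|$. The paper in fact applies Lemma~\ref{Reiher} directly without discussing the boundedness of $f$, so your truncation remark is an extra bit of rigor rather than a deviation from the paper's argument.
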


\begin{proof}
For any measurable subset $A\subseteq [0,1]$, we again use $\hat{A}_k$ to represent the subset of $A$ consisting of all $x\in A$ with $W_{P_k}(x)>0$. Using \eqref{e2}, we obtain that
\begin{equation*}\label{claim4(1)}
\begin{aligned}
\int_{A\times A}W_{2k+1}'(x,y)dxdy&=\int_{\hat{A}_k\times \hat{A}_k}W_{2k+1}'(x,y)dxdy\\
&=\int_{\hat{A}_k\times \hat{A}_k}\frac{W_{2k+1}(x,y)}{W_{P_k}(x)W_{P_k}(y)}dxdy\\
&=\int_{\hat{A}_k\times \hat{A}_k}\left(\int_{[0,1]^2}\frac{W_k(x,z)W(z,w)W_k(w,y)}{W_{P_k}(x)W_{P_k}(y)}dzdw\right)dxdy\\
&=\int_{[0,1]^2}W(z,w)\left(\int_{\hat{A}_k}\frac{W_k(x,z)}{W_{P_k}(x)}dx\right)\left(\int_{\hat{A}_k}\frac{W_k(w,y)}{W_{P_k}(y)}dy\right)dzdw\\
&=\int_{[0,1]^2}W(z,w)\left(\int_{\hat{A}_k}U_{k}(x,z)dx\right)\left(\int_{\hat{A}_k}U_k(y,w)dy\right)dzdw.
\end{aligned}
\end{equation*}
Applying Lemma~\ref{Reiher} with $f(z):=\int_{\hat{A}_k}U_k(x,z)dx$, we deduce that
\begin{equation*}\label{claim4(2)}
\begin{aligned}
\int_{A\times A}W_{2k+1}'(x,y)dxdy&=\int_{[0,1]^2}W(z,w)f(z)f(w)dzdw\geq d||f||_1^2\\
&=d\left(\int_{[0,1]}\left(\int_{\hat{A}_k}U_k(x,z)dx\right)dz\right)^2=d|A|^2,
\end{aligned}
\end{equation*}
where the last equality holds by \eqref{e3}.
This yields that $W_{2k+1}'$ is a $d$-locally dense graphon.
\end{proof}

By Claim~\ref{zero-measure}, we have $W_{2k+1}(x,y)=W_{2k+1}'(x,y)W_{P_k}(x)W_{P_k}(y)$ for all $(x,y)\in [0,1]^2$ except for a zero-measure set.
Hence, since $H$ is $\Delta$-regular, we can obtain that
\begin{equation*}
\begin{aligned}
t(H^{(2k)},W)&=t(H,W_{2k+1})=\int_{[0,1]^{v(H)}}\prod_{ij\in E(H)}W_{2k+1}(x_i,x_j)\prod_{v\in V(H)}dx_v\\
&=\int_{[0,1]^{v(H)}}\prod_{ij\in E(H)}W'_{2k+1}(x_i,x_j)\prod_{v\in V(H)}(W_{P_k}(x_v))^{\Delta}\prod_{v\in V(H)}dx_v.
\end{aligned}
\end{equation*}
From Claim~\ref{claim4}, we have that $W_{2k+1}'$ is $d$-locally dense.
Now applying Lemma~\ref{extend-Reiher} with $\omega(x)=(W_{P_k}(x))^{\Delta}$ for $x\in [0,1]$, we can derive that
\begin{equation}\label{e4}
\begin{aligned}
t(H^{(2k)},W)&=\int_{[0,1]^{v(H)}}\prod_{ij\in E(H)}W'_{2k+1}(x_i,x_j)\prod_{v\in V(H)}\omega(x_v)\prod_{v\in V(H)}dx_v\\
&\geq d^{e(H)}\left(\int_{[0,1]}\omega(x) dx\right)^{v(H)}\geq d^{e(H)}\left(\int_{[0,1]}W_{P_k}(x)dx\right)^{\Delta\cdot v(H)},
\end{aligned}
\end{equation}
where the last inequality follows from Jensen's inequality applied to the function $x\mapsto x^\Delta$, where $\Delta\geq 1$.
Finally, using the facts that $P_k$ is Sidorenko and $W$ is $d$-locally dense, we have
\begin{equation}\label{e5}
\int_{[0,1]}W_{P_k}(x)dx=t(P_k,W)\geq t(K_2,W)^{k}\geq d^{k}.
\end{equation}
Combining (\ref{e4}) with(\ref{e5}) and using the fact that $2e(H)=\Delta\cdot v(H)$,
we obtain that $$t(H^{(2k)},W)\geq d^{e(H)+k\Delta v(H)}=d^{(2k+1)e(H)}$$
holds for any $d$-locally dense graphon $W$.
By Lemma~\ref{graphon form}, we prove that $H^{(2k)}$ is KNRS.
\end{proof}

\section{Concluding remarks}
We prove that if $H$ is a $\kn$ graph, then its odd-subdivisions $H^{(2k-1)}$ are Sidorenko, and its even-subdivisions $H^{(2k)}$ are weakly-$\kn$.
Despite our suspicion that all $H^{(2k)}$ are in fact $\kn$ graphs,
we are unable to significantly improve the leading coefficient $c_H$ in the proof of Theorem~\ref{even sub}. 
We remark that the usual tensor power trick does not work here (for a counterexample, see Proposition 5.1 in \cite{BSW}).
However, it is worth noting that this indeed holds true when we assume the additional condition that $H$ is regular, as demonstrated in Theorem~\ref{regular sub}.

If both Conjecture~\ref{knrs} and Conjecture~\ref{regular-KNRS} are proven true, the equivalence of $\kn$ graphs, weakly-$\kn$ graphs, and regular-$\kn$ graphs becomes evident.
However, since both conjectures pose significant challenges,
it would be interesting to explore alternative approaches (such as employing deductive reasoning) to investigate the equivalence between these concepts, independent of Conjecture~\ref{knrs} and Conjecture~\ref{regular-KNRS}.
One intriguing question arises: Is it true that all weakly-$\kn$ graphs are $\kn$ graphs?
This question arises particularly in the context of even-subdivisions of a $\kn$ graph, as discussed in the preceding paragraph.

\appendix
\section{Proof of Lemma~\ref{F(B|A|)}}\label{app}
We restate Lemma~\ref{F(B|A|)} as follows.

\begin{lem}
Let $W$ be any graphon and let $A, B'$ be measurable subsets in $[0,1]$ with $|A|>0$.
Define the subset $B$ by \eqref{equ:B}. If $A$ is closed, then it holds that
$$\int_{B'\times B'}W[A](x,y)dxdy=\frac{1}{|A|^2}\int_{B\times B}W(x,y)dxdy.$$
\end{lem}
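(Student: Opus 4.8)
The plan is to unwind the definition of the restricting graphon $W[A]$ and reduce the claim to a change-of-variables identity driven by the function $F$ from \eqref{f(z)}. Recall $W[A](x,y) = W(F(x|A|), F(y|A|))$, so the left-hand side is $\int_{B'\times B'} W(F(x|A|),F(y|A|))\,dx\,dy$. First I would substitute $u = x|A|$, $v = y|A|$, which introduces a Jacobian factor $\frac{1}{|A|^2}$ and transforms the region $B'\times B'$ into $(|A|B')\times(|A|B')$, where $|A|B' := \{\,z|A| : z\in B'\,\}\subseteq[0,|A|]$. This turns the integral into $\frac{1}{|A|^2}\int_{(|A|B')\times(|A|B')} W(F(u),F(v))\,du\,dv$, so it remains to show this equals $\frac{1}{|A|^2}\int_{B\times B} W(x,y)\,dx\,dy$, i.e. to push the integral through the map $F$.

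The core is therefore a statement about $F$ as a measure-preserving-type map: for the (symmetric, measurable, bounded) function $g(x,y) := W(x,y)$, one has
\[
\int_{[0,|A|]^2} g(F(u),F(v))\,\mathbbm{1}_{|A|B'}(u)\,\mathbbm{1}_{|A|B'}(v)\,du\,dv \;=\; \int_{[0,1]^2} g(x,y)\,\mathbbm{1}_{B}(x)\,\mathbbm{1}_{B}(y)\,dx\,dy.
\]
To prove this I would invoke the pushforward/image-measure property of $F$ already recorded in the text just before \eqref{f(z)}: for every measurable $C\subseteq[0,1]$, $\int_C \mathbbm{1}_A(x)\,dx = |F^{-1}(C)|$, equivalently the pushforward of Lebesgue measure on $[0,|A|]$ under $F$ is $\mathbbm{1}_A\,dx$. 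Taking products, the pushforward of Lebesgue measure on $[0,|A|]^2$ under $(u,v)\mapsto(F(u),F(v))$ is the measure $\mathbbm{1}_A(x)\mathbbm{1}_A(y)\,dx\,dy$ on $[0,1]^2$. The change-of-variables formula for pushforward measures then gives $\int_{[0,|A|]^2} \phi(F(u),F(v))\,du\,dv = \int_{[0,1]^2}\phi(x,y)\,\mathbbm{1}_A(x)\mathbbm{1}_A(y)\,dx\,dy$ for any bounded measurable $\phi$ on $[0,1]^2$. I would apply this with $\phi(x,y) = W(x,y)\,\mathbbm{1}_B(x)\,\mathbbm{1}_B(y)$, using that $\mathbbm{1}_{|A|B'}(u) = \mathbbm{1}_{B}(F(u))$ for a.e.\ $u\in[0,|A|]$ (which follows from the definition \eqref{equ:B} of $B$ together with $F$ being injective up to null sets on $[0,|A|]$) and that $B\subseteq A$ so $\mathbbm{1}_A\mathbbm{1}_B = \mathbbm{1}_B$. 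Chaining these equalities yields exactly the desired identity.

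The step I expect to be the main obstacle is the bookkeeping around $F$: verifying cleanly that $\mathbbm{1}_{|A|B'}(u) = \mathbbm{1}_B(F(u))$ almost everywhere, and that $F$ indeed transports Lebesgue measure to $\mathbbm{1}_A\,dx$ in the product form needed (two-variable version, not just one). The one-variable statement $\int_C\mathbbm{1}_A = |F^{-1}(C)|$ is quoted in the excerpt, but I would want to confirm it upgrades to the product pushforward — this is routine by a $\pi$-$\lambda$ / monotone-class argument on rectangles $C_1\times C_2$, or simply by Fubini since the two coordinates are independent — and that the subtlety of $F$ not being literally injective (only injective off a null set, since $A$ may have "gaps") does not cause trouble; it does not, because any failure of injectivity happens on a set where the relevant integrals vanish. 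Once these measure-theoretic facts are in place the computation is a short chain of substitutions, so I would present it as: (1) rescale by $|A|$; (2) rewrite indicators of $|A|B'$ as $\mathbbm{1}_B\circ F$; (3) apply the pushforward change of variables; (4) use $B\subseteq A$ to absorb $\mathbbm{1}_A$.
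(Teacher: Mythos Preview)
Your proposal is correct and follows essentially the same route as the paper: both hinge on the indicator identity $\mathbbm{1}_{B'}(x)=\mathbbm{1}_{B}(F(x\cdot|A|))$ together with a change of variables through $F$, after which $B\subseteq A$ absorbs the factor $\mathbbm{1}_A$. The only packaging difference is that the paper encodes your pushforward step by applying the already-recorded identity \eqref{W[A]'} to the Hadamard product $W\bigodot U$ with $U(x,y)=\mathbbm{1}_B(x)\mathbbm{1}_B(y)$, whereas you carry out the image-measure computation directly.
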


\begin{proof}
For graphons $W$ and $U$, the {\it Hadamard product} of $W$ and $U$ is defined by $W\bigodot U(x,y)=W(x,y)\cdot U(x,y)$.
Note that $W\bigodot U$ is also a graphon.

By the definition of $B$, we have that $\mathbbm{1}_{B'}(x)=\mathbbm{1}_{B}(F(x\cdot |A|))$ for all $x\in [0,1]$. We define a new graphon $U$ as $U(x,y)=\mathbbm{1}_B(x)\mathbbm{1}_B(y)$ for $(x,y)\in [0,1]^2$. Then
\begin{equation}\label{F(B|A|)1}
U[A](x,y)=U(F(x\cdot|A|),F(y\cdot |A|))=\mathbbm{1}_B(F(x\cdot|A|))\mathbbm{1}_B(F(y\cdot |A|))=\mathbbm{1}_{B'}(x)\mathbbm{1}_{B'}(y)
\end{equation}
and
\begin{equation}\label{F(B|A|)2}
\begin{aligned}
W[A](x,y)\cdot U[A](x,y)&=W\big(F(x\cdot |A|),F(y\cdot |A|)\big)\cdot U\big(F(x\cdot |A|),F(y\cdot |A|)\big)\\
&=(W\bigodot U)(F(x\cdot |A|),F(y\cdot |A|))=(W\bigodot U)[A](x,y)
\end{aligned}
\end{equation}
hold for all $(x,y)\in [0,1]^2$.
Combining with \eqref{F(B|A|)1} and \eqref{F(B|A|)2}, it follows that
\begin{equation*}
\begin{aligned}
\int_{B'\times B'}W[A](x,y)dxdy=&\int_{[0,1]^2}W[A](x,y)\mathbbm{1}_{B'}(x)\mathbbm{1}_{B'}(y)dxdy\\
=&\int_{[0,1]^2}W[A](x,y)\cdot U[A](x,y)dxdy=t\big(K_2,(W\bigodot U)[A]\big).
\end{aligned}
\end{equation*}
Using \eqref{W[A]'} and the fact $B\subseteq A$, we derive that $t(K_2,(W\bigodot U)[A])$ is equal to
\begin{equation*}
\begin{aligned}
&\frac{1}{|A|^2}\int_{[0,1]^2}(W\bigodot U)(x,y)\mathbbm{1}_A(x)\mathbbm{1}_A(y)dxdy=\frac{1}{|A|^2}\int_{[0,1]^2}W(x,y)U(x,y)\mathbbm{1}_A(x)\mathbbm{1}_A(y)dxdy\\
=&\frac{1}{|A|^2}\int_{[0,1]^2}W(x,y)\mathbbm{1}_B(x)\mathbbm{1}_B(y)dxdy=\frac{1}{|A|^2}\int_{B\times B}W(x,y)dxdy.
\end{aligned}
\end{equation*}
Combining these facts, we get $\int_{B'\times B'}W[A](x,y)dxdy=\frac{1}{|A|^2}\int_{B\times B}W(x,y)dxdy$, as desired.
\end{proof}

\bigskip

{\it E-mail address:} mathsch@mail.ustc.edu.cn

\medskip

{\it E-mail address:} lyp\_inustc@mail.ustc.edu.cn

\medskip

{\it E-mail address:} jiema@ustc.edu.cn


\begin{thebibliography}{99}

\bibitem{BSW}
D. Brada{\v c}, B. Sudakov and Y. Wigderson, \newblock{Counting subgraphs in locally dense graphs}, \newblock{\emph{arXiv:2406.12418}}.


\bibitem{CKLL18}
D. Conlon, J. H. Kim, C. Lee, and J. Lee, \newblock{Some advances on Sidorenko's conjecture}, \newblock{\emph{J. London Math. Soc.}} \textbf{98(2)} (2018), 593--608.
	
\bibitem{CL17}
D. Conlon and J. Lee, \newblock{Finite reflection groups and graph norms}, \newblock{\emph{Adv. Math.}} \textbf{315} (2017), 130--165.

\bibitem{CL21}
D. Conlon and J. Lee, \newblock{Sidorenko's conjecture for blow-ups}, \newblock{\emph{Discrete Anal.}} \textbf{2} (2021), 1--14.

\bibitem{CR21}
L. N. Coregliano and A. A. Razborov, \newblock{Biregularity in Sidorenko's conjecture}, \newblock{\emph{arXiv:2108.06599}}.

\bibitem{Folland}
G. B. Folland \newblock{\emph{Real analysis: modern techniques and their applications}}, John Wiley \& Sons, (1999)

\bibitem{KNRS10}
Y. Kohayakawa, B. Nagle, V. R{\"o}dl and M. Schacht, \newblock{Weak hypergraph regularity and linear hypergraphs}, \newblock{\emph{J. Combin. Theory Ser. B}} \textbf{100} (2010) 151--160.

\bibitem{KNNVW22}
D. Kr{\'a}l', J. A. Noel, S. Norin, J. Volec, and F. Wei, \newblock{Non-bipartite $k$-common graphs}, \newblock{\emph{Combinatorica}} \textbf{42(1)} (2022), 87--114.

\bibitem{KVW}
D. Kr{\'a}l', J. Volec, and F. Wei, \newblock{Common graphs with arbitrary chromatic number}, \newblock{\emph{arXiv:2206.05800.}}

\bibitem{L21}
J. Lee, \newblock{On some graph densities in locally dense graphs}, \newblock{\emph{Random Structures Algorithms}} \textbf{58} (2021), 322--344.	



\bibitem{LL12}
L. Lov{\'a}sz, \newblock{\emph{Large Networks and Graph Limits}}, Amer. Math. Soc. Colloq. Publ. American Mathematical Society, (2012).




\bibitem{Re14}
C. Reiher, \newblock{Counting odd cycles in locally dense graphs}, \newblock{\emph{J. Combin. Theory Ser. B}} \textbf{105} (2014), 1--5.



\bibitem{S93}
A. Sidorenko, \newblock{A correlation inequality for bipartite graphs}, \newblock{\emph{Graphs Combin.}} \textbf{9(2-4)} (1993), 201--204.



\end{thebibliography}
\end{document}